\documentclass[11pt,a4paper]{article}

\voffset=-1.5cm \hoffset=-1.4cm \textwidth=16cm \textheight=22.0cm

\usepackage{graphicx}
\usepackage{amsmath}
\usepackage{amsfonts}
\usepackage{amssymb}
\usepackage{enumerate}
\usepackage{lscape}
\usepackage{longtable}
\usepackage{color}
\usepackage{url}

\newtheorem{theorem}{Theorem}[section]

\newtheorem{algorithm}{Algorithm}[section]

\newtheorem{corollary}{Corollary}[section]

\newtheorem{definition}{Definition}[section]

\newtheorem{lemma}{Lemma}[section]

\newtheorem{remark}{Remark}[section]

\newtheorem{assumption}{Assumption}[section]

\newenvironment{proof}[1][Proof]{\textbf{#1.} }{\ \rule{0.5em}{0.5em} \vspace{1ex}}

\date{May 4, 2012}

\setlength{\unitlength}{1mm}

\DeclareMathOperator{\argmax}{argmax}


\newcommand{\OOO}{\mathcal{O}}
\newcommand{\RRR}{\mathcal{R}}
\newcommand{\PPP}{\mathcal{P}}

\newcommand{\calM}{\mathcal{M}}

\renewcommand{\iota}{{l}}
\newcommand{\card}{{\rm card}}

\newcommand{\qua}{m}
\newcommand{\quaa}{m}
\newcommand{\fo}{f}
\newcommand{\frau}{g}
\newcommand{\drau}{\mathcal{D}}
\newcommand{\vad}{x}
\newcommand{\pon}{y}
\newcommand{\Pon}{Y}
\newcommand{\coe}{\alpha}
\newcommand{\tay}{T}
\newcommand{\dimquah}{(n+1)(n+2)/2}
\newcommand{\nstep}{k}

\newcommand{\RR}{\mathbb{R}}


\begin{document}

\title{Computation of Sparse Low Degree Interpolating Polynomials and their Application to Derivative-Free Optimization}
\author{
A. S. Bandeira\thanks{Program in Applied and Computational Mathematics,
Princeton University, Princeton, NJ 08544, USA ({\tt ajsb@math.princeton.edu}).}
\and
K. Scheinberg\thanks{
Department of Industrial and Systems Engineering, Lehigh University,
Harold S. Mohler Laboratory, 200 West Packer Avenue, Bethlehem, PA 18015-1582, USA
({\tt katyas@lehigh.edu}). The work of this author is partially supported by  AFOSR  under grant  FA9550-11-1-0239.
}
\and
L. N. Vicente%
\thanks{CMUC, Department of Mathematics, University of Coimbra,
3001-454 Coimbra, Portugal ({\tt lnv@mat.uc.pt}). Support for
this author was provided by FCT under grant PTDC/MAT/098214/2008.}
}
\maketitle
\footnotesep=0.4cm
{\small
\begin{abstract}
Interpolation-based trust-region methods are an important class
of algorithms for
Deriva- tive-Free Optimization which rely on locally approximating an
objective function by quadratic polynomial interpolation models, frequently
built from less points than there are basis components.

Often, in practical applications, the contribution of the problem variables
to the objective function is such that many pairwise correlations between
variables are negligible, implying, in the smooth case,
a sparse structure in the Hessian matrix.
To be able to exploit  Hessian sparsity, existing optimization approaches require the knowledge of the sparsity structure. The goal of this paper is to develop and analyze a method where the sparse models are constructed automatically.

The sparse recovery theory developed recently in the
field of compressed sensing characterizes conditions under which a sparse
vector can be accurately recovered from few random measurements.
Such a recovery is achieved by minimizing the $\ell_1$-norm of a vector subject to the measurements constraints.
We suggest an approach for building sparse quadratic
polynomial interpolation models
 by minimizing the $\ell_1$-norm of the entries
of the model Hessian subject to the interpolation conditions.
We show that this procedure recovers
accurate models when the function Hessian is sparse, using
relatively few randomly selected sample points.

Motivated by this result, we developed a practical interpolation-based
trust-region method using deterministic sample sets and minimum $\ell_1$-norm
quadratic models. Our computational results show
 that the new approach exhibits a promising numerical
performance both in the general case and in the sparse one.
\end{abstract}

\bigskip

\begin{center}
\textbf{Keywords:}
Derivative-free optimization, interpolation-based trust-region methods,
random sampling, sparse recovery, compressed sensing, $\ell_1$-minimization.
\end{center}
}


\section{Introduction}\label{chapter1}

\label{sec:cap1}

The wide range of applications of mathematical optimization
have been recently enriched by the developments in emerging
areas such as Machine Learning and Compressed Sensing, where a structure of a model needs to be recovered from some observations. Specially designed optimization methods have been developed to handle the new applications that  often give rise
to large scale, but convex and well structured problems.
However, in many real-world applications, the objective function is
calculated by some costly black-box simulation which does not provide information about
its derivatives. Although one could estimate the derivatives, e.g., by finite differences, such a process is often too expensive and can produce misleading results in the presence of noise. An alternative is to consider methods that do
not require derivative information, and such methods are the subject of
study in Derivative-Free Optimization (DFO).  In this paper we propose a reverse relationship between optimization and compressed sensing --- instead of using optimization methods
to solve compressed sensing problems
we use the results of compressed sensing to improve optimization methods by recovering and exploiting possible structures of the black-box objective functions.

An important class of methods in DFO are interpolation-based trust-region methods. At each iteration, these methods build a model of the objective function
that locally approximates it in some {\it trust region} centered at the current
 iterate. The model is then minimized in the trust region, and the
corresponding minimizer is, hopefully, a better candidate for being a minimizer of the objective function in the trust region, and thus, possibly, is taken as the next iterate. It is usually preferable that minimization of the model in the trust region is an easy task, hence the models should be simple.
The simplest yet meaningful class of models  is the class
of  linear functions. Their drawback is that they do not capture the curvature of the objective function and thus slow down the convergence of the methods.
A natural and convenient non-linear class of models, which is often efficiently used, is the quadratic class.
 Determined quadratic interpolation requires
sample sets whose  cardinality is approximately
equal to the square of the dimension,
which may turn out to be too costly if the objective function is expensive to
evaluate. An alternative is to consider underdetermined quadratic models, using
sample sets of  smaller size than the ones needed for determined
interpolation. However, in this case, the quality of the model may deteriorate.

In many applications, the objective function has structure,
such as sparsity of the Hessian, which one may
exploit to improve the efficiency of an optimization method.
In DFO, since derivatives are not known, typically neither is their sparsity structure. If the structure is known in advance,
such as in group partial separability, it can be exploited as it is proposed in~\cite{BColson_PhLToint_2005}.
The main idea of our work is to implicitly and automatically take advantage of the  sparsity of the Hessian
in the cases when the sparsity structure is not known in advance,  to build accurate
 models from relatively small sample sets. This goal is achieved by minimizing the $\ell_1$-norm of the Hessian model coefficients.

Our work relies on the sparse solution recovery theory developed recently in the field of compressed sensing, where one characterizes conditions under which a sparse signal can be accurately recovered from few random measurements. Such type of recovery is achieved by minimizing the
$\ell_1$-norm of the unknown signal subject to measurement constraints
and can be accomplished in polynomial time.

The contribution of this paper is twofold. First, we show that it is possible to
compute fully quadratic models (i.e., models with the same accuracy as
 second order Taylor models) for functions defined on $\RR^n$ with sparse Hessians
 based on randomly selected sample sets  with only $\OOO(n (\log n)^4)$
 sample points (instead of the $\OOO(n^2)$ required for the determined quadratic case)
when the number of non-zero elements in the Hessian of the
function is $\OOO(n)$. Second, we introduce a practical interpolation-based trust-region DFO
algorithm exhibiting  competitive numerical performance.

The
 state-of-the-art approach is to build quadratic interpolation models, based
on sample sets of any size
between $n+1$ and $(n+1)(n+2)/2$, taking up the available degrees of freedom
by choosing the models with the smallest Frobenius norm
of the Hessian~\cite{ARConn_KScheinberg_LNVicente_2009} or Hessian change~\cite{MJDPowell_2004}, and this
approach has been shown to be robust and efficient in practice
(see also the recent paper~\cite{SGratton_PLToint_ATro_2010} where the models are always determined,
varying thus the number of basis components). In the approach
proposed in this paper, the degrees of freedom are taken up by minimizing the
$\ell_1$-norm of the Hessian of the model. We have tested the practical DFO algorithm
using both minimum Frobenius and minimum $\ell_1$-norm models.
Our results demonstrate the ability of the $\ell_1$-approach to improve the results of the
Frobenius one in the presence of some form of sparsity in the
Hessian of the objective function.

This paper is organized as follows. In Section~\ref{chapter2}, we introduce
background material on interpolation models. We give a brief introduction
to compressed sensing in Section~\ref{chapter3}, introducing also
concepts related to {\it partially} sparse recovery (the details are left to a separate
paper~\cite{ABandeira_KScheinberg_LNVicente_2011-b}).
In Section~\ref{chapter4}, we
obtain the main result mentioned above for sparse recovery of models for
functions with sparse Hessians, using an orthogonal basis for the space
of polynomials of degree $\leq 2$.
The proof of this result is based on
sparse bounded orthogonal expansions which are briefly described in the
beginning of Section~\ref{chapter4}. In Section~\ref{chapter5}, we introduce our
practical interpolation-based trust-region method and present numerical results
for the two underdetermined quadratic model variants, defined by
minimum Frobenius and $\ell_1$-norm minimization.
Finally, in Section~\ref{chapter6} we draw some conclusions and discuss
possible avenues for future research.

The paper makes extensive use of vector, matrix, and functional norms.
We will use $\ell_p$ or $\|\cdot\|_p$ for vector and matrix norms, without ambiguity.
The notation~$B_p(x;\Delta)$ will represent a closed ball
in~$\RR^n$, centered at~$x$ and of radius~$\Delta$, in the $\ell_p$-norm,
i.e., $B_p(x;\Delta) = \{ y \in \RR^n: \|y-x\|_p \leq \Delta \}$.
For norms of functions on normed spaces $L$, we will use~$\|\cdot\|_L$.

\section{Use of models in DFO trust-region methods}\label{chapter2}

\subsection{Fully linear and fully quadratic models}\label{sec:models}

One of the main techniques used in DFO consists
 of locally modeling the objective function $\fo:D\subset\RR^n\to\RR$
 by models that are ``simple'' enough to be optimized easily and
sufficiently ``complex'' to approximate $\fo$ well.
 If a reliable estimate of the derivatives  of the function is available, then
 one typically uses Taylor approximations of first and second order
as polynomial models of $\fo(x)$. In DFO one has no access to
derivatives or their accurate estimates, and hence other model classes
are considered. However, the essential approximation quality of the
Taylor models is required to be sustained when necessary by the models used in
convergent  DFO frameworks.
For instance, the simplest first order approximation is
provided by, the so-called, fully linear models, whose definition requires
$\fo$ to be smooth up to the first order.

\begin{assumption}\label{assumptionlinear}
Assume that $\fo$ is continuously differentiable with Lipschitz
continuous gradient (on an open set containing $D$).
(For simplicity we will assume that all the balls and neighborhoods considered
in this paper are contained in~$D$.)
\end{assumption}

The following definition is essentially the same as
given in~\cite[Definition 6.1]{ARConn_KScheinberg_LNVicente_2009}  stated
using balls in an arbitrary $\ell_p$-norm, with $p\in(0,+\infty]$.

\begin{definition} \label{deffullylinearmodel}
Let a function $f:D\to\RR$ satisfying Assumption~\ref{assumptionlinear} be
given. A set of model functions $ {\calM}=\{m: \mathbb{R}^n\to
\mathbb{R}, \, m \in C^1\}$  is called a {\it fully linear} class of
models if the following
hold:
\begin{enumerate}
\item
There exist positive constants  $\kappa_{ef}$, $\kappa_{eg}$,
and $\nu_1^m$, such that for any $x_0\in D$ and $ \Delta \in
(0,\Delta_{max}]$ there exists a model function $m$ in ${\cal
M}$, with Lipschitz continuous gradient
and corresponding
Lipschitz constant bounded by  $\nu_1^m$, and such that
\begin{itemize}

\item the error between the gradient of the model and the gradient
of the function satisfies $$\|
\nabla f(\vad) - \nabla m(\vad) \|_2 \; \leq \; \kappa_{eg} \, \Delta,
\quad \forall \vad \in B_p(x_0;\Delta),$$

\item and the error between the  model and the function satisfies $$|
f(\vad) - m(\vad) | \; \leq \;
\kappa_{ef} \, \Delta^2, \quad  \forall \vad \in B_p(x_0;\Delta).$$
\end{itemize}
Such a  model $m$  is called fully linear on $B_p(x_0; \Delta)$.
\item
For this class $\cal M$ there exists an algorithm, which we will
call a  `model-improvement' algorithm, \index{model!model-improvement algorithm}
that in a finite, uniformly
bounded \index{uniformly bounded!number of improvement steps}
(with respect to $x_0$ and $ \Delta$) number of steps can
\begin{itemize}
 \item either provide a certificate  \index{fully linear!certificate}
for a given  model $m\in \cal M$ that
 it is  fully linear on $B_p(x_0;\Delta)$,
\item or fail to provide such a certificate \index{fully linear!certificate}
and find a model $\tilde{m} \in \cal M$ fully linear on $B_p(x_0;\Delta)$.
\end{itemize}
\end{enumerate}

\end{definition}

It is important to note that this definition does not restrict fully
linear models to linear functions, but instead considers models that
approximate $\fo$ \emph{as well} as the linear Taylor approximations.
Linear models such as linear interpolation (and first order Taylor
approximation) do not capture the curvature information of the
function that they are approximating. To achieve better practical local
convergence rates in general it is essential to consider nonlinear
models. In this paper we focus on quadratic interpolation models,
which ultimately aim at a higher degree of approximation accuracy.
We call such approximation models fully quadratic,
following~\cite{ARConn_KScheinberg_LNVicente_2009}, and note that, as
in  the linear case, one can consider a wider class of models not
necessarily quadratic.
We now require the function $\fo$ to exhibit smoothness up to the second order.

\begin{assumption}\label{assumptionquadratica}
Assume that $\fo$ is twice differentiable with Lipschitz continuous
Hessian (on an open set containing $D$).
\end{assumption}

Below we state  the definition of fully quadratic models given
in~\cite[Definition 6.2]{ARConn_KScheinberg_LNVicente_2009}, again
using balls in an $\ell_p$-norm, with arbitrary $p\in(0,+\infty]$.
\begin{definition} \label{deffullyquadmodel}
Let a function $f:D\to\RR$ satisfying
Assumption~\ref{assumptionquadratica} be given. A set of model
functions $ {\cal M}=\{m: \mathbb{R}^n\to \mathbb{R}, \, m \in C^2
\}$  is called a {\it fully quadratic} class of models if the following hold:

\begin{enumerate}
\item
There exist positive constants  $\kappa_{ef}$, $\kappa_{eg}$, $\kappa_{eh}$,
and $\nu_2^m$, such that for any $x_0\in D$ and $ \Delta \in
(0,\Delta_{max}]$ there exists a model function $m$ in ${\calM}$,
with Lipschitz continuous Hessian
and corresponding
Lipschitz constant bounded by  $\nu_2^m$, and such that
\begin{itemize}
\item the error between the Hessian of the  model and the Hessian of the
function satisfies  $$\| \nabla^2
f(\vad) - \nabla^2 m(\vad) \|_2 \; \leq \; \kappa_{eh} \, \Delta, \quad
\forall \vad \in B_p(x_0;\Delta),$$

\item the error between the gradient of the model and the gradient
of the function satisfies $$\|
\nabla f(\vad) - \nabla m(\vad) \|_2 \; \leq \; \kappa_{eg} \, \Delta^2,
\quad \forall \vad \in B_p(x_0;\Delta),$$

\item and the error between the  model and the function satisfies $$|
f(\vad) - m(\vad) | \; \leq \;
\kappa_{ef} \, \Delta^3, \quad  \forall \vad \in B_p(x_0;\Delta).$$
\end{itemize}
Such a  model $m$  is called fully quadratic on $B_p(x_0; \Delta)$.
\item
For this class $\cal M$ there exists an algorithm, which we will
call a  `model-improvement' algorithm, \index{model!model-improvement algorithm}
that in a finite, uniformly
bounded \index{uniformly bounded!number of improvement steps}
(with respect to $x_0$ and $ \Delta$) number of steps can
\begin{itemize}
 \item either provide a certificate  \index{fully linear!certificate}
for a given  model $m\in \cal M$ that
 it is  fully quadratic on $B_p(x_0;\Delta)$,
\item or fail to provide such a certificate \index{fully linear!certificate}
and find a model $\tilde{m} \in \cal M$ fully quadratic on $B_p(x_0;\Delta)$.
\end{itemize}
\end{enumerate}

\end{definition}

This  definition of a fully quadratic class requires
 that given a model from the class one can either prove that it is a fully
 quadratic model of $f$ on a given $B_p(x_0;\Delta)$, and for given
$\kappa_{ef}$, $\kappa_{eg}$, $\kappa_{eh}$, and $\nu_2^m$,
independent of $x_0$ and $\Delta$,
or provide such a model.
  It is shown in~\cite[Chapter~6]{ARConn_KScheinberg_LNVicente_2009} that
model-improvement algorithms exist for quadratic interpolation and
regression models. Hence quadratic interpolation models form a fully
quadratic class of models. They also exist for a fully linear class of
models, where
a model-improvement algorithm in~\cite{ARConn_KScheinberg_LNVicente_2009}
checks if a quadratic interpolation model is built  using a
well-poised set of at least $n+1$ interpolation points. To certify
that a model is fully quadratic, a model-improvement algorithm in~\cite{ARConn_KScheinberg_LNVicente_2009} requires that the set of the
interpolation points is
well poised and contains $(n+1)(n+2)/2$ points in the proximity of~$x_0$.
Thus, using a model-improvement algorithm  often  implies considerable
computational cost: it may be prohibitive to maintain sets of
$(n+1)(n+2)/2$ sample points near the current iterate due to the cost of obtaining the
function values and the dimension of the problem. Moreover,
verifying that the sample set is well poised may require a
factorization of a matrix with $(n+1)(n+2)/2$ rows and columns
resulting in $\mathcal{O}(n^6)$ complexity. For small~$n$, this additional cost
may be negligible, but it becomes substantial as $n$ grows beyond a few dozen.

In this paper we show that for any given function $f$, there exist
constants  $\kappa_{ef}$, $\kappa_{eg}$, $\kappa_{eh}$, and $\nu_2^m$
and the corresponding fully quadratic class of quadratic models ${\cal
M}$ for which, given~$x_0$ and~$\Delta$, we can construct a fully
quadratic model of $f$ on $B_p(x_0;\Delta)$ from ${\cal M}$, with high
probability, using, possibly, less than $(n+1)(n+2)/2$ sample points.

Note that Definition~\ref{deffullyquadmodel} requires the existence of an
algorithm which can {\it deterministically} certify that a given model
is fully quadratic. This requirement is imposed  because it
enables the deterministic convergence analysis of an algorithmic framework,
 provided in~\cite{ARConn_KScheinberg_LNVicente_2009-paper}
(see also~\cite[Chapter~10]{ARConn_KScheinberg_LNVicente_2009}), based on fully quadratic
(or fully linear) models. In contrast, in this
work, we consider an algorithm which cannot certify that a given model
is fully quadratic, but can construct such models with high
probability, hopefully, at a
considerable computational saving.
To adapt this approach in a convergent algorithmic framework, a
stochastic version of such a framework has to be designed and
analyzed.
 This work is a subject of future research (see~\cite{MCFerris_GDeng_2008} for some
 relevant theoretical results).

Finally, we want to point out that while the full convergence theory for
 the new model-based algorithmic framework is under development, the practical
implementation reported in this paper shows that in a simple trust-region framework the new
method works as well as or better than other methods discussed in~\cite{ARConn_KScheinberg_LNVicente_2009}.

\subsection{Quadratic polynomial interpolation models}\label{subsec:quad-models}

In model based DFO fully quadratic models of~$\fo$  are often obtained from
the class of  quadratic polynomials  by  interpolating~$\fo$ on some sample set of points $Y$. A detailed description of this process and related theory is given in \cite{ARConn_KScheinberg_LNVicente_2009}. Here we present
 briefly the basic ideas and necessary notation.

Let $\PPP_n^2$ be the space of polynomials of degree less than or equal to $2$ in $\RR^n$. The dimension of this space is $q=\dimquah$. A basis $\phi$ for
$\PPP_n^2$ will be denoted by $\phi=\{\phi_\iota(x)\}$ with $\iota=1, \ldots, q $. The most natural basis for polynomial spaces is the one consisting of the monomials, or the
\emph{canonical basis}. This basis appears naturally in Taylor models and is given for $\PPP_n^2$ by
\begin{equation}\label{basecanonicaigualdade}
\bar{\phi} \; = \; \left\{\frac12\vad_1^2,...,\frac12\vad_n^2,\vad_1\vad_2,...,\vad_{n-1}\vad_n,\vad_1,...,\vad_n,1 \right\}.
\end{equation}

We say that the quadratic function $\quaa$ interpolates $\fo$ at a given point $\pon$ if $\quaa(\pon)=\fo(\pon)$.
Assume that we are given a set $\Pon=\{\pon^1,...,\pon^p\}\subset\RR^n$ of interpolation points. A quadratic function $\quaa$ that interpolates $\fo$ at the points in $\Pon$, written as
$$\quaa(\vad) \; = \; \sum_{\iota=1}^q\alpha_\iota\phi_\iota(\vad),$$
must satisfy the following $p$ interpolation conditions $\sum_{\iota=1}^{q}\alpha_\iota\phi_\iota(\pon^i)=\fo(\pon^i),$ $i=1,...,p$.
These conditions form a linear system,
\begin{equation}\label{condinterpolacaomat}M(\phi,\Pon)\alpha \; = \; \fo(\Pon),\end{equation}
where $M(\phi,\Pon)$ is the interpolation matrix and $\fo(\Pon)_i=\fo(\pon^i),\ i=1,...,p$.

A sample set $\Pon$ is poised for (determined) quadratic interpolation if the corresponding interpolation matrix $M(\phi,\Pon)$ is square ($p$=$q$) and non-singular, guaranteeing that there exists a unique quadratic polynomial $\quaa$ such that $\quaa(\Pon)=\fo(\Pon)$. It is not hard to prove that this definition of poisedness and the uniqueness of the interpolant do not depend either on $\fo$ or on the basis $\phi$ (see~\cite[Chapter~3]{ARConn_KScheinberg_LNVicente_2009}).

In~\cite[Chapters~3~and~6]{ARConn_KScheinberg_LNVicente_2009} rigorous conditions on $\Pon$ are derived which ensure ``well poisedness'' for  quadratic interpolation. Under these conditions it is shown that
if $\Pon\subset B_2(x_0;\Delta)$ is a well-poised sample set for quadratic interpolation, then the quadratic function $\qua$ that interpolates $\fo$ on $\Pon$ is a fully quadratic model for $\fo$ on $B_2(x_0;\Delta)$
for some fixed positive constants $\kappa_{ef}$, $\kappa_{eg}$, $\kappa_{eh}$, and $\nu^m_2$.

One of the conditions imposed in~\cite{ARConn_KScheinberg_LNVicente_2009} on  a sample set $\Pon$ to guarantee fully quadratic interpolation model, is that
$\Pon$ has to contain $p=\dimquah$ points. However, building such a sample set costs $\dimquah$ evaluations of the function $\fo$
which  is too expensive for many applications. A typical efficient approach is to
consider smaller sample sets, which makes the linear system in~(\ref{condinterpolacaomat}) underdetermined.

\subsubsection{Underdetermined quadratic interpolation}\label{subsec-underdetermined}

We will now consider the case where the size of the sample set $\Pon$ satisfies $n+1<p<\dimquah$, in other words,
 when there are more points than is required for linear interpolation but fewer than is necessary for determined quadratic interpolation. If we consider the class of all quadratic functions that interpolate $\fo$ on
$\Pon$, then we can choose a model from this class that for one reason or other seems the most suitable. In particular approaches in~\cite{ARConn_KScheinberg_PhLToint_1998} and~\cite{SMWild_2008} select a quadratic model with the smallest possible Frobenius norm of the Hessian matrix, while in~\cite{MJDPowell_2004} a model is chosen to minimize the Frobenius norm of the {\em change} of the Hessian from one iteration to the next.
The former approach is studied in detail in~\cite[Chapter~5]{ARConn_KScheinberg_LNVicente_2009}. Let us introduce the basic ideas here.

To properly introduce the underdetermined models that we wish to consider
we split the basis $\bar{\phi}$ in~(\ref{basecanonicaigualdade}) into its linear and quadratic components: $\bar{\phi}_L=\{\vad_1,...,\vad_n,1\}$ and $\bar{\phi}_Q=\bar{\phi}\setminus \bar{\phi}_L$.

An essential property of  a sample set~$\Pon$ with $|\Pon|>n+1$ is that the matrix  $M(\bar{\phi}_L,\Pon)$ must have sufficiently linearly independent columns (in \cite[Section~4.4]{ARConn_KScheinberg_LNVicente_2009} it is said that~$\Pon$  is
well poised for linear regression). Roughly speaking, well poisedness  means that
 under a suitable scaling of $Y$,  $M(\bar{\phi}_L,\Pon)$ has a relatively small condition number, see~\cite[Section~4.4]{ARConn_KScheinberg_LNVicente_2009}. In that case for {\em any} quadratic  model which interpolates $\fo$ on $\Pon$ the following holds (see~\cite[Theorem~5.4]{ARConn_KScheinberg_LNVicente_2009} for a rigorous statement and proof).

\begin{theorem}\label{quadimplicalin}
For any $x_0\in D$ and $\Delta\in (0, \Delta_{max}]$, let $\qua$ be a quadratic function that interpolates $\fo$ in $\Pon$, where $\Pon\subset B_2(x_0;\Delta)$ is a sample set well poised for linear regression. Then, $\qua$ is fully linear (see Definition~\ref{deffullylinearmodel}) for $\fo$ on $B_2(x_0;\Delta)$ where the constants $\kappa_{ef}$ and $\kappa_{eg}$ are $\OOO(1+\|\nabla^2\qua\|_2)$ and depend on the condition number of
$M(\bar{\phi}_L,\Pon)$ (where $Y$ here is suitably scaled).
\end{theorem}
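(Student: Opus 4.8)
The plan is to reduce the underdetermined quadratic interpolation error to a standard error estimate for linear regression, treating the quadratic part of the model as a controlled perturbation. First I would write the model as $\qua(\vad) = \qua_L(\vad) + \qua_Q(\vad)$, where $\qua_L$ collects the affine terms (coefficients against $\bar\phi_L$) and $\qua_Q$ collects the purely quadratic terms (coefficients against $\bar\phi_Q$). On the scaled ball $B_2(x_0;\Delta)$, each basis element of $\bar\phi_Q$ has magnitude $\OOO(\Delta^2)$, so $|\qua_Q(\vad)| = \OOO(\|\nabla^2\qua\|_2\,\Delta^2)$ and $\|\nabla \qua_Q(\vad)\|_2 = \OOO(\|\nabla^2\qua\|_2\,\Delta)$ uniformly on the ball; similarly $\fo$ differs from its first-order Taylor expansion at $x_0$ by $\OOO(\Delta^2)$ in value and $\OOO(\Delta)$ in gradient, using Assumption~\ref{assumptionlinear} (Lipschitz gradient).

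Next I would exploit the interpolation conditions: since $\qua(\pon^i) = \fo(\pon^i)$ for each $\pon^i \in \Pon$, the affine part satisfies $\qua_L(\pon^i) = \fo(\pon^i) - \qua_Q(\pon^i)$. Thus $\qua_L$ is the (unique, since $\Pon$ is well poised for linear regression) least-squares / regression affine fit to the perturbed data $\{\fo(\pon^i) - \qua_Q(\pon^i)\}$. I would compare this to $\ell_{\fo}$, the affine regression fit to the true data $\{\fo(\pon^i)\}$ — or more directly to the first-order Taylor polynomial of $\fo$ at $x_0$ — and bound the difference of coefficient vectors by the norm of $M(\bar\phi_L,\Pon)^\dagger$ (a pseudoinverse bound governed by the condition number of the scaled $M(\bar\phi_L,\Pon)$) times the size of the data perturbation, which is the $\qua_Q$ values plus the Taylor remainder, i.e.\ $\OOO((1+\|\nabla^2\qua\|_2)\Delta^2)$. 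This is exactly the mechanism of the standard fully-linear estimates for linear interpolation/regression in~\cite[Chapters~2--4]{ARConn_KScheinberg_LNVicente_2009}; here it is applied with a modified right-hand side.

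Finally I would assemble the triangle inequality. For any $\vad \in B_2(x_0;\Delta)$,
\[
|\fo(\vad) - \qua(\vad)| \;\le\; |\fo(\vad) - \ell_{\fo}(\vad)| \;+\; |\ell_{\fo}(\vad) - \qua_L(\vad)| \;+\; |\qua_Q(\vad)|,
\]
and likewise for the gradients with $\nabla$. The first term on the right is the classical linear-regression error $\OOO(\Delta^2)$ (resp.\ $\OOO(\Delta)$ for the gradient); the second is controlled by the pseudoinverse/condition-number argument of the previous paragraph; the third is the direct $\qua_Q$ bound. Combining, $|\fo(\vad)-\qua(\vad)| = \OOO((1+\|\nabla^2\qua\|_2)\Delta^2)$ and $\|\nabla\fo(\vad)-\nabla\qua(\vad)\|_2 = \OOO((1+\|\nabla^2\qua\|_2)\Delta)$, with the hidden constants depending only on the condition number of the scaled $M(\bar\phi_L,\Pon)$ and on the Lipschitz constant of $\nabla\fo$ — which is precisely the assertion (and one checks $\nabla\qua$ is trivially Lipschitz, bounding $\nu_1^m$). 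The main obstacle I anticipate is bookkeeping the scaling: one must pass to the shifted-and-scaled sample set $\hat\pon^i = (\pon^i - x_0)/\Delta$ so that "well poised for linear regression" gives a condition number bound independent of $\Delta$, carefully track how each factor of $\Delta$ reappears when unscaling, and verify that the pseudoinverse of the scaled linear system indeed absorbs the perturbation uniformly over the ball rather than only at the sample points — the rest is the routine triangle-inequality estimate already carried out in the cited reference for the unperturbed case.
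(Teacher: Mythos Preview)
The paper does not supply its own proof of this theorem; it states the result and defers entirely to \cite[Theorem~5.4]{ARConn_KScheinberg_LNVicente_2009} for the rigorous statement and proof. Your proposal is correct in substance and is essentially the standard argument carried out in that reference: treat the purely quadratic part of the model as a controlled perturbation and invoke the linear-regression error bounds on the remaining affine part, with the well-poisedness hypothesis controlling the condition number of the scaled $M(\bar\phi_L,\Pon)$.

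One simplification is worth pointing out. The intermediate affine function $\ell_{\fo}$ and the three-term triangle inequality are unnecessary. Define $\tilde{\fo}=\fo-\qua_Q$. Then $\nabla\tilde{\fo}$ is Lipschitz with constant at most $L_{\nabla\fo}+\|\nabla^2\qua\|_2$, and $\qua_L$ interpolates $\tilde{\fo}$ \emph{exactly} on~$\Pon$ (not merely in the least-squares sense), since $\qua_L(\pon^i)=\qua(\pon^i)-\qua_Q(\pon^i)=\fo(\pon^i)-\qua_Q(\pon^i)=\tilde{\fo}(\pon^i)$. Applying the standard fully-linear bound for linear regression directly to the pair $(\qua_L,\tilde{\fo})$ gives $|\qua_L(\vad)-\tilde{\fo}(\vad)|=\OOO((1+\|\nabla^2\qua\|_2)\Delta^2)$ and $\|\nabla\qua_L(\vad)-\nabla\tilde{\fo}(\vad)\|_2=\OOO((1+\|\nabla^2\qua\|_2)\Delta)$ on the ball. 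But $\fo-\qua=\tilde{\fo}-\qua_L$ identically, so you are done in a single step. Your concern about the shift-and-scale bookkeeping is exactly the right place to be careful (in particular, $\qua_Q$ must be taken as the quadratic part \emph{centered at $x_0$}, i.e.\ $\tfrac12(\vad-x_0)^\top\nabla^2\qua\,(\vad-x_0)$, not the coefficients against the uncentered $\bar\phi_Q$, for the bound $|\qua_Q(\vad)|=\OOO(\|\nabla^2\qua\|_2\Delta^2)$ to hold on $B_2(x_0;\Delta)$); the cited reference handles this precisely as you anticipate.
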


Theorem~\ref{quadimplicalin} suggests that one should build underdetermined quadratic models with ``small'' model Hessians, thus motivating minimizing
its Frobenius norm subject to~(\ref{condinterpolacaomat})
as in~\cite{ARConn_KScheinberg_PhLToint_1998} and~\cite{SMWild_2008}.
 Recalling the split of the basis $\bar{\phi}$ into the linear and the quadratic parts, one can write the interpolation model as
\[
\qua(\vad) \; = \; \coe_Q^T\bar{\phi}_Q(\vad) + \coe_L^T\bar{\phi}_L(\vad),
\]
where $\coe_Q$ and $\coe_L$ are the
corresponding  parts of the coefficient vector $\coe$. The minimum Frobenius norm solution~\cite[Section~5.3]{ARConn_KScheinberg_LNVicente_2009} can now be defined as the solution to the following optimization problem
\begin{equation}\label{minFrobenius}\begin{array}{cl}
\min & \frac12\|\coe_Q\|^2_2\\ [1ex]
\operatorname{s.t.} & M(\bar{\phi}_Q,\Pon)\coe_Q + M(\bar{\phi}_L,\Pon)\coe_L \; = \; \fo(\Pon).
\end{array}\end{equation}
(If $|\Pon|=\dimquah$ and $M(\bar{\phi}, \Pon)$ is nonsingular, this reduces to determined quadratic interpolation.)
Note that~(\ref{minFrobenius}) is a convex quadratic program with a closed form solution.

In~\cite[Section~5.3]{ARConn_KScheinberg_LNVicente_2009} it is shown
that under some additional conditions of well poisedness on~$\Pon$,
the minimum Frobenius norm (MFN) interpolating model can be fully linear with uniformly bounded error constants $\kappa_{ef}$ and $\kappa_{eg}$. Hence, the MFN
quadratic models provide at least as accurate interpolation as linear models.

On the other hand, it has not been shown so far that any class of underdetermined quadratic interpolation models provide provably better
  approximation of $\fo$ than fully linear models. The purpose of this paper is to show how to construct, with high probability,
underdetermined quadratic interpolation models that are fully quadratic.

\subsubsection{Sparse quadratic interpolation}\label{subsec-sparse}

It is clear that without any additional assumptions on $\fo$ we cannot guarantee a fully quadratic accuracy by an interpolation model based on less than $\dimquah$ points. We will thus consider the structure that is most commonly observed and exploited in large-scale derivative based optimization: the (approximate) sparsity of the Hessian of $\fo$.
Special structure, in particular group partial separability of $\fo$, has been exploited in DFO before, see~\cite{BColson_PhLToint_2005}. However, it was assumed that the specific structure is known in advance.
In the derivative-free setting, however, while the sparsity structure
of the Hessian may be known in some cases, it is often unavailable.
Moreover, we do not need to assume that there exists a fixed sparsity structure of the Hessian.

What we assume in this paper is that the Hessian of $\fo$ is ``approximately'' sparse in the domain where the model is built. In other words we assume the existence of a sparse fully quadratic model (a rigorous definition is provided in Section~\ref{sec:43}). In the case where $\nabla^2\fo$ is itself sparse, a Taylor expansion may serve as such a model.
 The main focus of our work is to recover sparse quadratic
models from the interpolation conditions.

Instead of solving (\ref{minFrobenius})  we construct quadratic models from the solution to the following optimization problem
\begin{equation}\label{minl1cap2}\begin{array}{cl}
\min & \|\coe_Q\|_1\\ [1ex]
\operatorname{s.t.} & M(\bar{\phi}_Q,\Pon)\coe_Q + M(\bar{\phi}_L,\Pon)\coe_L \; = \; \fo(\Pon),
\end{array}\end{equation}
where $\coe_Q$, $\coe_L$, $\bar{\phi}_Q$, and $\bar{\phi}_L$ are defined as in~(\ref{minFrobenius}).
 Solving~(\ref{minl1cap2})  is  tractable, since it is a linear program. Note that  minimizing
 the $\ell_1$-norm of the entries of the Hessian model indirectly controls  its $\ell_2$-norm
 and therefore  is an appealing approach from the perspective of
 Theorem~\ref{quadimplicalin}.  This makes the new approach a reasonable alternative to
building MFN models. As we will show in this paper, this approach is advantageous when the
 Hessian of $\fo$ has  zero entries (in other words, when there is no
 direct interaction between some of the variables of the objective function~$\fo$).
 In such cases, as we will show  in Section~\ref{chapter4}, we are able to recover,
 with high probability, fully quadratic models with much less than $\dimquah$ random points.
 This is the first result where a fully quadratic model is constructed from an underdetermined
 interpolation system. To prove this result we will rely on sparse vector recovery theory
 developed in the field of compressed sensing. In the next section we introduce the basic concepts and results that are involved.

\section{Compressed sensing}\label{chapter3}

Compressed sensing is a field concerned with the recovery of a sparse vector~$\bar{z}\in\RR^N$ satisfying $b=A\bar{z}$, given
a vector $b\in\RR^k$ and a matrix $A \in \RR^{k\times N}$
with significantly fewer rows than columns $(k\ll N)$.
The desired sparse vector~$\bar{z}\in\RR^N$ can be recovered by minimizing the number of non-zero components
by solving
\begin{equation}\label{minl0fourier}
\min \card(z) \quad \operatorname{s.t.}\quad A z= b,
\end{equation}
where $\card(z) = | \{ i \in \{1,\ldots,n \}: z_i \neq 0 \}|$.
Since this problem is generally NP-Hard, one considers a more tractable approximation
by substituting its objective function by a relatively close convex one:
\begin{equation}\label{minl1}
\min \|z\|_1 \quad \operatorname{s.t.}\quad Az=b,
\end{equation}
which is a linear program.
The main results of compressed sensing show that, under certain conditions on the (possibly random) matrix $A$, the solution of~(\ref{minl1}) is in fact $\bar{z}$ and coincides with the optimal solution of~(\ref{minl0fourier}) (possibly, with high probability).
We will now discuss the compressed sensing results that are useful for our purposes.

\subsection{General concepts and properties}\label{sec:CS}

One says that a vector $z$ is $s-$sparse if $\card(z) \leq s$. In compressed sensing, one is interested in matrices $A$ such that, for every $s-$sparse vector $\bar z$, the information given by $b=A\bar{z}$ is sufficient to recover $\bar z$ and, moreover, that such recovery can be accomplished by solving problem~(\ref{minl1}).
The following definition of the \emph{Restricted Isometry Property} (RIP) is introduced in~\cite{ECandes_TTao_2006}.

\begin{definition}[Restricted Isometry Property]\label{defRIP}
One says that $\delta_s>0$ is the Restricted Isometry Property Constant, or \emph{RIP} constant, of order $s$ of the matrix
$A\in\RR^{k\times N}$ if $\delta_s$ is the smallest positive real such that:
\[\left(1-\delta_s\right)\|z\|_2^2 \; \leq \; \|A z\|_2^2 \; \leq \; \left(1+\delta_s\right)\|z\|_2^2\]
for every $s-$sparse vector $z$.
\end{definition}

The following theorem (see, e.g.,~\cite{EJCandes_2009,HRauhut_2010}) provides a useful sufficient condition for successful recovery by~(\ref{minl1}) with $b=A \bar z$.

\begin{theorem}\label{teoremaRIP}
Let $A\in\RR^{k\times N}$ and $2s < N$. If
$\delta_{2s}<\frac1{3}$,
where $\delta_{2s}$ is the RIP constant of $A$ of order~$2s$, then, for every $s-$sparse vector $\bar z$, problem~(\ref{minl1}) with $b=A \bar z$ has a unique solution and it is given by $\bar z$.
\end{theorem}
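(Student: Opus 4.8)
The plan is to establish the standard two-part argument for $\ell_1$-recovery under an RIP hypothesis: first show that any $\bar z$ that is $s$-sparse is a solution of~(\ref{minl1}), and then show uniqueness. Let $\bar z$ be $s$-sparse with support $S$, $|S| \le s$, and suppose $z^\star$ is any feasible point of~(\ref{minl1}), i.e. $A z^\star = b = A\bar z$. Write $h = z^\star - \bar z$, so that $h$ lies in the null space of $A$. The key object is the decomposition of $h$ into blocks according to a greedy partition of its support: let $S_0 = S$, let $S_1$ index the $s$ coordinates of $h$ (outside $S_0$) of largest magnitude, $S_2$ the next $s$ largest, and so on. I would first record the elementary ``shifting'' inequality $\|h_{S_{j+1}}\|_2 \le s^{-1/2}\|h_{S_j}\|_1$ for $j \ge 1$, which follows because every entry of $h_{S_{j+1}}$ is bounded by the average of the entries of $h_{S_j}$.

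Next I would combine the feasibility/optimality information with the null-space membership. Since $z^\star$ is optimal, $\|z^\star\|_1 \le \|\bar z\|_1$; expanding over $S$ and its complement and using the reverse triangle inequality gives the cone condition $\|h_{S^c}\|_1 \le \|h_{S}\|_1$. From this and the shifting inequality, summing over $j \ge 1$ yields $\sum_{j \ge 1}\|h_{S_j}\|_2 \le s^{-1/2}\|h_{S^c}\|_1 \le s^{-1/2}\|h_S\|_1 \le \|h_{S_0}\|_2$, where the last step is Cauchy--Schwarz on the $s$-sparse vector $h_{S_0}$. Now set $T = S_0 \cup S_1$ (so $|T| \le 2s$). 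Because $Ah = 0$ we have $A h_T = -\sum_{j \ge 2} A h_{S_j}$; applying the lower RIP bound of order $2s$ to the left side and the upper RIP bound of order $s$ (hence of order $2s$, since $\delta_s \le \delta_{2s}$) to each term on the right, together with the triangle inequality, gives
\[
(1-\delta_{2s})\|h_T\|_2^2 \;\le\; \langle A h_T, A h_T\rangle \;=\; -\sum_{j\ge 2}\langle A h_T, A h_{S_j}\rangle .
\]
The cross terms $\langle A h_{S_0}, A h_{S_j}\rangle$ and $\langle A h_{S_1}, A h_{S_j}\rangle$ with $j \ge 2$ are disjointly supported $2s$-sparse pairs, so the standard ``approximate orthogonality'' consequence of RIP bounds each by $\delta_{2s}\|h_{S_i}\|_2\|h_{S_j}\|_2$ ($i=0,1$). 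Chaining these estimates with $\|h_{S_0}\|_2, \|h_{S_1}\|_2 \le \|h_T\|_2$ and the summed shifting bound above produces an inequality of the shape $(1-\delta_{2s})\|h_T\|_2 \le \sqrt{2}\,\delta_{2s}\,\|h_{S_0}\|_2 \le \sqrt{2}\,\delta_{2s}\,\|h_T\|_2$; I would then check that the hypothesis $\delta_{2s} < 1/3$ makes the coefficient $\sqrt{2}\,\delta_{2s}/(1-\delta_{2s}) < 1$, forcing $\|h_T\|_2 = 0$. (One should double-check the exact constant: the clean threshold $1/3$ corresponds precisely to $\sqrt 2\,\delta < 1 - \delta$; if the intermediate constant comes out slightly different I would use the sharper bookkeeping that keeps $\langle Ah_{S_0}+Ah_{S_1}, Ah_{S_j}\rangle$ together rather than splitting, which is what yields exactly $\sqrt 2$.) Finally, $\|h_T\|_2 = 0$ gives $\|h_{S^c}\|_1 \le \|h_S\|_1 = \|h_{S_0}\|_1 = 0$, hence $h = 0$ and $z^\star = \bar z$; this simultaneously shows $\bar z$ is optimal and that the optimum is unique.

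The main obstacle is purely the constant-tracking in the last step: getting the chain of RIP inequalities to collapse to the clean criterion $\delta_{2s} < 1/3$ requires being careful about whether one splits $h_T$ into $h_{S_0}$ and $h_{S_1}$ before or after applying the parallelogram/approximate-orthogonality estimate, since a naive split loses a factor and only yields something like $\delta_{2s} < 3 - 2\sqrt 2$. Everything else — the shifting inequality, the cone condition, and the disjoint-support RIP cross-term bound — is routine. I would cite~\cite{EJCandes_2009,HRauhut_2010} for the sharp form of the cross-term lemma and present the constant bookkeeping in a single displayed chain.
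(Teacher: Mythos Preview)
The paper does not actually prove Theorem~\ref{teoremaRIP}: it is quoted as a known result from the compressed sensing literature, with the proof deferred to the cited references~\cite{EJCandes_2009,HRauhut_2010}. Your proposal reproduces precisely the standard Cand\`es argument from those references --- the greedy block decomposition of the null-space vector $h$, the shifting inequality, the cone condition from optimality, and the RIP approximate-orthogonality bound on the cross terms --- so there is no divergence in approach to discuss.

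Two small points of bookkeeping. First, the summed shifting inequality should read $\sum_{j\ge 2}\|h_{S_j}\|_2 \le s^{-1/2}\|h_{S^c}\|_1$, not $\sum_{j\ge 1}$; fortunately it is exactly the $j\ge 2$ sum that enters the later estimate via $Ah_T=-\sum_{j\ge 2}Ah_{S_j}$, so the argument is unaffected. Second, your concern about constant-tracking is slightly misplaced: the sharp version of the argument actually yields the threshold $\delta_{2s}<\sqrt{2}-1\approx 0.414$, which is \emph{weaker} than the hypothesis $\delta_{2s}<1/3$ in the statement, so the constant $1/3$ gives you room to spare rather than something you must hit exactly.
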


Although the RIP provides useful sufficient conditions for sparse recovery, it is a difficult and still open problem to find deterministic matrices
which satisfy such a property when the underlying system is highly underdetermined
(see~\cite{Bandeira_etal_FlatRIP,TTao_2007} for a discussion on this topic). It turns out that random matrices provide a better ground for this analysis (see for instance,  one of the
 results in~\cite{RBaraniuk_MDavenport_RDeVore_MWakin_2008}). 
 
\subsection{Partially sparse recovery}\label{sec:partialCS}

To be able to apply sparse recovery results of compressed sensing to our setting we first observe that problem~(\ref{minl1cap2}) is similar to problem~(\ref{minl1}), however, it differs in that only a part of the solution vector $\alpha$ is expected to be sparse and appears in the objective function.
 We hence need to consider an extended recovery result for partial sparsity.

Formally, one has $z=(z_1,z_2)$, where $z_1\in\RR^{N-r}$ is $(s-r)-$sparse and $z_2\in\RR^r$.
A natural generalization of problem~(\ref{minl1}) to this setting of partially sparse recovery is given by
\begin{equation}\label{minpartiall1}
\min \|z_1\|_1 \quad \operatorname{s.t.} \quad A_1 z_1 + A_2 z_2 \; = \; b,
\end{equation}
where $A=(A_1,A_2)$ and $A_1$ has the first $N-r$ columns of $A$ and
$A_2$ the last $r$. One can easily see that problem~(\ref{minl1cap2}) fits into this
formulation by setting  $z_1 = \alpha_Q$, $z_2 = \alpha_L$, $A_1 = M(\bar{\phi}_Q,\Pon)$, $A_2 = M(\bar{\phi}_L,\Pon)$, and $r=n+1$.

We can define an extension of the RIP to the partially sparse recovery setting.
Under the assumption that $A_2$ is full column rank (which in turn is implied by the RIP;
see~\cite{ABandeira_KScheinberg_LNVicente_2011-b}), let
\begin{equation}\label{projM}
{\cal P} \; = \; I-A_2\left(A_2^\top A_2\right)^{-1}A_2^\top
\end{equation}
be the matrix representing the projection from $\RR^N$ onto $\RRR\left(A_2\right)^\bot.$ Then, the problem of recovering
$(\bar{z}_1,\bar{z}_2)$, where $\bar{z}_1$ is an $(s-r)$--sparse vector satisfying $A_1 \bar{z}_1+A_2\bar{z}_2=b$, can be
stated as the problem of recovering an $(s-r)-$sparse vector $\bar{z}_1$ satisfying $\left({\cal P}A_1\right)z_1={\cal P}b$
and then recovering~$\bar{z}_2$ satisfying $A_2z_2=b-A_1 \bar{z}_1$. The latter task results in solving a linear system given that $A_2$ has full column rank and $({\cal P}A_1) \bar{z}_1={\cal P}b$. Note that the former task reduces to the classical setting of compressed sensing. These considerations motivate the following definition of RIP for partially sparse recovery.

\begin{definition}[Partial RIP Property]\label{defpartialRIP}
We say that $\delta_{s-r}^r>0$ is the Partial Restricted Isometry Property Constant of order $s-r$ for recovery of size $N-r$ of the matrix $A=(A_1, A_2)\in\RR^{k\times N}$ (with $A_1 \in \RR^{k\times (N-r)}$,
$A_2 \in \RR^{k \times r}$, and
$r\leq s$) if $A_2$ is full column rank and
$\delta_{s-r}^r$ is the RIP constant of order $s-r$ (see Definition~\ref{defRIP}) of the matrix ${\cal P}A_1$, where ${\cal P}$ is given by~(\ref{projM}).
\end{definition}

When $r=0$ the Partial RIP reduces to the RIP of Definition~\ref{defRIP}.
In~\cite{ABandeira_KScheinberg_LNVicente_2011-b} we show a simple proof
of the fact that if a matrix $A$
 satisfies RIP for $s-$sparse recovery with $\delta_{s}$ constant, then
it  also satisfies Partial RIP with $\delta_{s-r}^r= \delta_{s}$.
A very similar result has been independently proved
in~\cite{LJacques_2010}.
It is also shown in~\cite{ABandeira_KScheinberg_LNVicente_2011-b}
 that Partial RIP implies that the solution of~(\ref{minpartiall1}) is the
original $s-$sparse solution $\bar{z}=(\bar{z}_1,\bar{z}_2)$.
Hence to be able to apply sparse recovery results to  problem~(\ref{minl1cap2}),
which is of interest to us, it suffices to construct matrices $M(\bar{\phi},\Pon)$
for which the RIP property holds. In~\cite{NVaswani_WLu_2011} a specific sufficient condition for partially
sparse  recovery is given, but it remains to be seen if we can use such a result
to strengthen the bounds on the sample set size which we derive in Section~\ref{chapter4}.
To establish these bounds, we will rely on results on random matrices which
apply to our specific setting. We discuss these results in the next section.

\section{Recovery of Sparse Hessians}\label{chapter4}

\subsection{Sparse recovery using orthonormal bases}\label{sec:41}

For the purposes of building quadratic models based on sparse Hessians
we are interested in solving~(\ref{minl1cap2}) which is equivalent to~(\ref{minpartiall1}), where
$A_1=M(\phi_Q,Y)$, $A_2=M(\phi_L,Y)$,
$z_1 = \coe_Q$, $z_2=\coe_L$, $b=\fo(\Pon)$, and $r=n+1$.
In this case $\phi$ is a basis in the space $\PPP_n^2$ of polynomials of degree $\leq 2$ of
dimension $N=(n+1)(n+2)/2$ and the resulting quadratic model $\qua$ is constructed as
$$
\qua(x) \; = \; \sum_{\iota=1}^N \alpha_\iota \phi_\iota(x)
$$
where $\alpha$ is the vector of coefficients which is presumed to be sparse (with partially known support since~$\alpha_L$
is not necessarily sparse).

Let us now consider a general setting of a finite dimensional space of functions (defined in some domain $\drau$) spanned by a basis  $\phi=\{\phi_1,...,\phi_N\}$ of functions (not necessarily polynomial).
Let us also consider a function $\frau:\drau\to\RR$ which belongs to that space, in other words $\frau$ can be written as $$\frau \; = \; \sum_{j=1}^N\coe_j\phi_j,$$
for some expansion coefficients $\coe_1,...,\coe_N$. We are interested in the problem of recovering $\frau$ from its values in some finite subset $\Pon=\{\pon^1,...,\pon^k\}\subset \drau$ with $k\leq N$, with the additional assumption that $\frau$ is $s-$sparse, meaning that the expansion coefficient vector $\coe$ is $s-$sparse.
The purpose of this section is to provide conditions under which such
 recovery occurs with high probability. Although the results of this
section hold also for complex valued functions, we will restrict ourselves to the real case, because the functions we are interested in DFO are real valued. We consider a probability measure $\mu$ defined in $\drau$ (having in mind that $\drau\subset\RR^n$). The basis $\phi$ will be required to satisfy the following orthogonality property~\cite{HRauhut_2010}.

\begin{definition}[$K$-bounded orthonormal basis]\label{kboundedness}
A set of functions $\phi=\{\phi_1,...,\phi_N\}$, spanning a certain function space, is said to be an orthonormal basis satisfying the $K$-boundedness condition (in the domain $\drau$ for the measure $\mu$) if
$$\int_\drau\phi_i(\vad)\phi_j(\vad)d\mu(\vad) \; = \; \delta_{ij},$$
(here $\delta_{ij}$ is the Kronecker delta) and $\|\phi_j\|_{L^\infty(\drau)}\leq K$, for all $i,j\in \{1,\ldots,N\}$.
\end{definition}

The following theorem (see~\cite[Theorem~4.4]{HRauhut_2010}) shows that by selecting the sample set
$\Pon$ randomly we can recover the sparse coefficient vector
with fewer sample points than basis coefficients.

\begin{theorem} \label{rauhutuniforme}
Let $M(\phi,\Pon)\in\RR^{k\times N}$ be the interpolation matrix associated with an orthonormal basis satisfying the $K$-boundedness condition. Assume that the sample set $\Pon=\{\pon^1,...,\pon^k\}\subset\drau$ is chosen randomly where each point is drawn independently according to the probability measure $\mu$. Further assume that
\begin{eqnarray}
\frac{k}{\log k}&\geq& c_1K^2s(\log s)^2 \log N,\label{kmaiorepsilonantes}\\
k&\geq& c_2K^2s\log\left(\frac1\varepsilon\right),\label{kmaiorepsilon}
\end{eqnarray}
where $c_1,c_2>0$ are universal constants,  $\varepsilon\in(0,1)$, and $s\in \{1, \ldots, N\}$. 
Then, with probability at least $1-\varepsilon$, $\frac1{\sqrt{k}}A=\frac1{\sqrt{k}}M(\phi,\Pon)$  
satisfies the RIP property (Definition~\ref{defRIP}) with constant $\delta_{2s}<\frac1{3}$.
\end{theorem}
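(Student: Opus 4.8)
The statement is the specialization to $\delta_{2s}<\tfrac13$ of~\cite[Theorem~4.4]{HRauhut_2010}, so the shortest route is to verify that our hypotheses match its statement; here I outline the argument one would run to establish it from scratch. Write $X_\ell\in\RR^N$ for the $\ell$-th row of $A=M(\phi,\Pon)$, i.e.\ $(X_\ell)_j=\phi_j(\pon^\ell)$. Since the $\pon^\ell$ are i.i.d.\ with law $\mu$ and $\phi$ is orthonormal for $\mu$, one has $\mathbb{E}[X_\ell X_\ell^\top]=I_N$, hence $\mathbb{E}[\tfrac1k A^\top A]=I_N$. By Definition~\ref{defRIP}, the RIP constant of order $2s$ of $\tfrac1{\sqrt k}A$ is
\[
\delta_{2s}\;=\;\sup_{|S|\le 2s}\Bigl\|\tfrac1k{\textstyle\sum_{\ell=1}^{k}}(X_\ell)_S(X_\ell)_S^\top-I_{|S|}\Bigr\|_{2\to2}\;=\;\sup_{u\in U_{2s}}\Bigl|\tfrac1k{\textstyle\sum_{\ell=1}^{k}}\langle X_\ell,u\rangle^2-1\Bigr|,
\]
where $U_{2s}$ is the set of $2s$-sparse unit vectors, so it suffices to show this empirical process stays below $\tfrac13$ with probability at least $1-\varepsilon$.

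First I would bound $\mathbb{E}[\delta_{2s}]$. The $K$-boundedness gives $|\langle X_\ell,u\rangle|\le\|X_\ell\|_\infty\|u\|_1\le K\sqrt{2s}$ for all $u\in U_{2s}$, so the summands are uniformly bounded. After symmetrization one passes to the Rademacher process $u\mapsto\tfrac1k\sum_\ell\epsilon_\ell\langle X_\ell,u\rangle^2$ and applies Dudley's entropy integral (equivalently, generic chaining). Its increments, via $\langle X_\ell,u\rangle^2-\langle X_\ell,v\rangle^2=\langle X_\ell,u-v\rangle\langle X_\ell,u+v\rangle$ and the $\ell_\infty$ bound just mentioned, live in a metric dominated by $\tfrac{K^2 s}{k}\|u-v\|_2^2$ on the index set (up to a factor involving $\delta_{2s}$ itself, a circularity removed by the standard bootstrap that turns $E\lesssim a\sqrt E+b$ into $E\lesssim a^2+b$). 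Since $\log\mathcal{N}(U_{2s},\|\cdot\|_2,t)\lesssim s\log(eN/s)+s\log(1/t)$ — a union over $\binom{N}{2s}$ supports of volumetric bounds on $2s$-dimensional balls — the entropy integral yields
\[
\mathbb{E}[\delta_{2s}]\;\lesssim\;\sqrt{\frac{K^2 s\,(\log s)^2\log N}{k}}\;+\;\frac{K^2 s\,(\log s)^2\log N}{k},
\]
and~(\ref{kmaiorepsilonantes}) with $c_1$ large enough makes this $\le\tfrac16$.

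Next I would pass from the mean to a tail bound: $\delta_{2s}$ is a supremum of sums of independent, centered terms each bounded by $2K^2 s/k$, so Talagrand's concentration inequality for suprema of empirical processes (or a Bernstein-type estimate) gives $\delta_{2s}\le\mathbb{E}[\delta_{2s}]+c\bigl(\sqrt{\tfrac{K^2 s}{k}\log\tfrac1\varepsilon}+\tfrac{K^2 s}{k}\log\tfrac1\varepsilon\bigr)$ with probability at least $1-\varepsilon$; under~(\ref{kmaiorepsilon}) with $c_2$ large the parenthesized quantity is $\le\tfrac16$, so $\delta_{2s}<\tfrac13$, as required.

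The hard part is the chaining step. Getting exactly the $s(\log s)^2\log N$ dependence — rather than the cruder $s\log N$ one obtains from a plain union bound over the $\binom{N}{2s}$ supports combined with a matrix Chernoff/Bernstein inequality (which still works, only with worse logarithmic exponents) — requires the two-scale covering of $U_{2s}$, careful bookkeeping of where the factors $K$ and $\sqrt s$ enter, and the bootstrap that removes the self-reference on $\delta_{2s}$. That step, together with tracking constants so that both $\tfrac16$ budgets are genuinely met, is where essentially all the effort lies; the rest is routine and, given the reference, one would in practice simply cite~\cite[Theorem~4.4]{HRauhut_2010}.
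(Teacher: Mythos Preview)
The paper does not give its own proof of this theorem: it simply cites~\cite[Theorem~4.4]{HRauhut_2010}, exactly as you note in your first sentence. Your additional sketch of the Rauhut argument (symmetrization, Dudley/chaining bound on $\mathbb{E}[\delta_{2s}]$, then Talagrand-type concentration) is a correct outline of how that reference proceeds and goes well beyond what the paper itself provides, but for the purposes of matching the paper, your opening observation --- that one just invokes~\cite[Theorem~4.4]{HRauhut_2010} --- is all that is needed.
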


From the classical results in compressed sensing (see Theorem~\ref{teoremaRIP} and the paragraph afterwards),
this result implies that every $s-$sparse vector $\bar{z} \in \RR^N$
is the unique solution to the $\ell_1$-minimization problem~(\ref{minl1}),
with $A=M(\phi,\Pon)$ and $b=M(\phi,\Pon) \bar{z} =\frau(\Pon)$.
However, it also implies, by~\cite[Theorem~4.2]{ABandeira_KScheinberg_LNVicente_2011-b},
that every $s-$sparse vector $(\bar{z}_1,\bar{z}_2)$
with $(s-r)-$sparse $\bar{z}_1\in \RR^{N-r}$ and possibly dense $\bar{z}_2\in \RR^r$, is the unique solution to the $\ell_1$-minimization problem~(\ref{minpartiall1}), with $A=M(\phi,\Pon)$ and $b=M(\phi,\Pon) \bar{z} =\frau(\Pon)$. Note that it is a scaled version of $A$,
given by $A/\sqrt{k}$ and not $A$ itself, that satisfies the RIP property but this does not affect the recovery results in the exact setting. As we will see below the scaling
has an effect in the noisy case.

It is worth noting that an optimal result is obtained if one sets
$\varepsilon=e^{-\frac{k}{c_2K^2s}}$ in the sense that~(\ref{kmaiorepsilon})
is satisfied with equality. Also, from~(\ref{kmaiorepsilonantes})
we obtain $k\geq (\log k) c_1K^2$ $s(\log s)^2 \log N$,
and so, using $\log s\geq 1$,
$1-e^{-\frac{k}{c_2K^2s}}\geq 1-N^{-\gamma\log k}$, for the universal
constant $\gamma=c_1/c_2$. Thus, $\varepsilon$ can be set such that the
probability of success $1-\varepsilon$ satisfies
\begin{equation}\label{epsilonoptimoK}
1-\varepsilon \; \geq \; 1-N^{-\gamma\log k},
\end{equation}
showing that this probability
grows polynomially with $N$ and $k$.

As we observe later in this section, we are not  interested in satisfying
the interpolation conditions exactly, hence we need to consider
instead of
$b=\frau(\Pon)$ a perturbed  version $b=\frau(\Pon)+\epsilon$, with a known bound on the size of $\epsilon$. In order to extend the results we just described to the case of noisy recovery, some modifications of problem~(\ref{minl1})
are needed. In the case of full  noisy recovery  it is typical to consider, instead of the formulation~(\ref{minl1}), the following optimization problem:
\begin{equation}\label{minl1noisy}
\min \|z\|_1 \quad \operatorname{s.t.}\quad \|Az-b\|_2 \; \leq \; \eta,
\end{equation}
where $\eta$ is a positive number. We now present a recovery result based on the formulation~(\ref{minl1noisy}) and thus appropriate to the noisy case. A proof is available in~\cite{EJCandes_2009}.

\begin{theorem}\label{rauhutnoisy}
Under the same assumptions of Theorem~\ref{rauhutuniforme}, with probability at least $1-\varepsilon$, $\varepsilon\in(0,1)$, the following holds for every $s-$sparse vector $\bar z$:

Let noisy samples $b=M(\phi,\Pon) \bar{z}+\epsilon$ with
$$\|\epsilon\|_2 \; \leq \; \eta$$
be given, for any $\eta$ non-negative, and let $z^\ast$ be the solution of the $\ell_1$-minimization problem~(\ref{minl1noisy}) with $A=M(\phi,\Pon)$. Then,
\begin{equation}\label{herefortheremark}
 \|z^\ast - \bar{z}\|_2 \; \leq \; \frac{c_{total}}{\sqrt{k}}\,\eta
\end{equation}
for some universal constant $c_{total}>0$.
\end{theorem}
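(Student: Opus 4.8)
The plan is to derive Theorem~\ref{rauhutnoisy} from Theorem~\ref{rauhutuniforme} together with the standard deterministic stable $\ell_1$-recovery theorem of compressed sensing, the only genuine work being the bookkeeping of the $1/\sqrt{k}$ normalization. First I would fix the random draw of $\Pon$: by Theorem~\ref{rauhutuniforme}, under the stated lower bounds on $k$, the matrix $\tilde A := \frac{1}{\sqrt{k}}M(\phi,\Pon)$ satisfies the RIP of order $2s$ (Definition~\ref{defRIP}) with constant $\delta_{2s}<\frac13$ with probability at least $1-\varepsilon$. All subsequent reasoning takes place on this single event; since it concerns the matrix alone, the ensuing error bound will automatically hold \emph{simultaneously} for every $s$-sparse $\bar z$, which is exactly the quantifier order appearing in the statement.

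Next I would rescale the feasibility problem~(\ref{minl1noisy}). Writing $b = M(\phi,\Pon)\bar z + \epsilon$ with $\|\epsilon\|_2\le\eta$ and dividing the constraint by $\sqrt{k}$, the feasible set of~(\ref{minl1noisy}) coincides with $\{z:\|\tilde A z - \tilde b\|_2\le\tilde\eta\}$ for $\tilde b:=b/\sqrt{k}$ and $\tilde\eta:=\eta/\sqrt{k}$, while the objective $\|z\|_1$ is untouched; hence $z^\ast$ is also the minimizer of the rescaled problem. Moreover $\tilde b = \tilde A\bar z + \epsilon/\sqrt{k}$ with $\|\epsilon/\sqrt{k}\|_2\le\tilde\eta$, so $\bar z$ is feasible for the rescaled problem, and $\delta_{2s}(\tilde A)<\frac13<\sqrt2-1$. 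Applying the standard stable recovery estimate (as in~\cite{EJCandes_2009}) to $\tilde A,\tilde b,\tilde\eta$ yields universal constants $C_0,C_1$, depending only on the RIP threshold, with
\[
\|z^\ast - \bar z\|_2 \;\le\; C_0\,\frac{\sigma_s(\bar z)_1}{\sqrt{s}} + C_1\,\tilde\eta,
\qquad \sigma_s(\bar z)_1 := \min\{\|\bar z - v\|_1 : \card(v)\le s\}.
\]
Since $\bar z$ is exactly $s$-sparse, $\sigma_s(\bar z)_1=0$, so $\|z^\ast-\bar z\|_2\le C_1\tilde\eta = C_1\eta/\sqrt{k}$, and setting $c_{total}:=C_1$ gives~(\ref{herefortheremark}).

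The main obstacle is not conceptual but a matter of care. One must verify that the RIP constant $\frac13$ delivered by Theorem~\ref{rauhutuniforme} lies below the threshold required for stable recovery (it does, since $\frac13<\sqrt2-1$) and, crucially, that the constant $C_1$ in the stable recovery bound depends only on $\delta_{2s}$ — not on $k$, $N$, $s$, or $\Pon$ — since this is what makes $c_{total}$ truly universal and confines all $k$-dependence to the explicit $1/\sqrt{k}$ factor. It is also worth stressing, in contrast to the exact-data case, that the normalization genuinely matters here: the noise level $\eta$ is measured against the unnormalized residual $\|Az-b\|_2$, so it must be converted to $\eta/\sqrt{k}$ before the RIP-normalized matrix $\tilde A$ can be invoked, which is precisely the origin of the $1/\sqrt{k}$ in~(\ref{herefortheremark}).
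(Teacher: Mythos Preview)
Your argument is correct. The paper itself does not give a proof of this theorem; it simply states that ``A proof is available in~\cite{EJCandes_2009}'' and moves on. What you have written is precisely the natural way to make that citation explicit: invoke Theorem~\ref{rauhutuniforme} to secure, on a single high-probability event, the RIP bound $\delta_{2s}<\tfrac13$ for the normalized matrix $\tilde A=\tfrac{1}{\sqrt{k}}M(\phi,\Pon)$, then apply the deterministic stable-recovery estimate of~\cite{EJCandes_2009} to the rescaled problem $(\tilde A,\tilde b,\tilde\eta)$. Your bookkeeping of the $1/\sqrt{k}$ factor, your check that $\tfrac13<\sqrt{2}-1$, and your observation that the constant $C_1$ depends only on the RIP threshold (hence is universal) are all correct and are exactly the points one must verify. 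There is nothing to add or correct.
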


Since we are interested in the partially sparse recovery case, we need to consider instead
\begin{equation}\label{minl1noisy-partial}
\min \|z_1\|_1 \quad \operatorname{s.t.}\quad \|Az-b\|_2 \; \leq \; \eta.
\end{equation}
The extension of Theorem~\ref{rauhutnoisy} to partially recovery for the noisy case is obtained from the
full noisy recovery, analogously to the exact case (see~\cite[Theorem~5.2]{ABandeira_KScheinberg_LNVicente_2011-b}
for a proof).

\begin{theorem} \label{th:noisy_partial}
Under the same assumptions of Theorem~\ref{rauhutuniforme}, with probability at least $1-\varepsilon$, $\varepsilon\in(0,1)$, the following holds for every vector $\bar z = ({\bar z}_1, {\bar z}_2 )$ with $r \leq s$ and $\bar z_1$ an $(s-r)-$sparse vector:

Let noisy samples $b=M(\phi,\Pon) \bar{z}+\epsilon$ with
$$\|\epsilon\|_2 \; \leq \; \eta$$
be given, for any~$\eta$ non-negative, and let $z^\ast=(z^*_1,z^*_2 )$ be the solution of the $\ell_1$-minimization problem~(\ref{minl1noisy-partial}) with $A=M(\phi,\Pon)$. Then,
\begin{equation}
\|z^\ast - \bar{z}\|_2 \leq \frac{c_{partial}}{\sqrt{k}}\,\eta, \label{noisy-bound-partial}
\end{equation}
for some universal constant $c_{partial}>0$.
\end{theorem}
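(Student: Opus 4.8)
The plan is to reduce the partially sparse noisy recovery problem to the fully sparse noisy recovery problem of Theorem~\ref{rauhutnoisy} by projecting onto $\RRR(A_2)^\bot$, exactly mirroring the reduction described in the paragraph preceding Definition~\ref{defpartialRIP}. First I would recall that, by Theorem~\ref{rauhutuniforme}, with probability at least $1-\varepsilon$ the scaled matrix $A/\sqrt{k}$ satisfies the RIP of order $2s$ with $\delta_{2s}<1/3$; in particular $A_2=M(\bar\phi_L,\Pon)$ is full column rank (since its $r=n+1 \leq s$ columns form a submatrix of $A$ restricted to an $r$-sparse support), so the projector ${\cal P}$ in~(\ref{projM}) is well defined, and $A$ satisfies the Partial RIP of order $s-r$ with $\delta_{s-r}^r=\delta_{2s}$ by the result of~\cite{ABandeira_KScheinberg_LNVicente_2011-b} cited above. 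Condition on this event for the rest of the argument.

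Next, given the noisy data $b = A\bar z + \epsilon = A_1\bar z_1 + A_2 \bar z_2 + \epsilon$ with $\|\epsilon\|_2 \leq \eta$, apply ${\cal P}$ to obtain ${\cal P}b = ({\cal P}A_1)\bar z_1 + {\cal P}\epsilon$, using ${\cal P}A_2 = 0$. Since ${\cal P}$ is an orthogonal projector, $\|{\cal P}\epsilon\|_2 \leq \|\epsilon\|_2 \leq \eta$, so $\bar z_1$ is an $(s-r)$-sparse vector that is consistent, up to noise level $\eta$, with the reduced system having matrix ${\cal P}A_1$. The matrix $({\cal P}A_1)/\sqrt{k}$ satisfies RIP of order $2(s-r)$ with constant $<1/3$ (this is what Partial RIP gives, and one checks $2(s-r)<N-r$ from $2s<N$), so Theorem~\ref{rauhutnoisy} applied to the reduced system yields $\|z_1^\ast - \bar z_1\|_2 \leq (c_{total}/\sqrt{k})\,\eta$, where $z_1^\ast$ is the $\ell_1$-minimizer of $\min\|z_1\|_1$ s.t. $\|({\cal P}A_1)z_1 - {\cal P}b\|_2\leq\eta$. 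The one point requiring care here is that the $z_1^\ast$ produced by the feasible set of~(\ref{minl1noisy-partial}) — namely the first block of a minimizer of $\min\|z_1\|_1$ s.t. $\|Az-b\|_2\leq\eta$ over both blocks — coincides with (or is at least as good as) the minimizer of the projected problem; this follows because for any feasible $(z_1,z_2)$ of~(\ref{minl1noisy-partial}) one has $\|({\cal P}A_1)z_1-{\cal P}b\|_2 = \|{\cal P}(Az-b)\|_2\leq\|Az-b\|_2\leq\eta$, so the projected feasible set contains the projection of the original one, while conversely any $z_1$ feasible for the projected problem can be completed to a pair $(z_1,z_2)$ feasible for~(\ref{minl1noisy-partial}) by solving the least-squares problem for $z_2$ — this is precisely where $A_2$ being full column rank is used to control $\|Az-b\|_2$ back down to $\eta$. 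Making this equivalence precise, as done in~\cite[Theorem~5.2]{ABandeira_KScheinberg_LNVicente_2011-b}, is the technical heart of the argument.

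Finally, I would bound the error in the second block. From $A_2 z_2^\ast = $ (least-squares fit of $b - A_1 z_1^\ast$) and $A_2\bar z_2 = b - A_1\bar z_1 - \epsilon + (\text{orthogonal component})$, subtract and use that $A_2$ has full column rank with smallest singular value bounded below — the RIP of order $r$ on $A/\sqrt{k}$ gives $\sigma_{\min}(A_2/\sqrt{k})\geq\sqrt{1-\delta_{2s}}>\sqrt{2/3}$, so $A_2^\top A_2 \succeq \tfrac{2}{3}k\,I$. Combining $\|A_2(z_2^\ast-\bar z_2)\|_2 \lesssim \|A_1(z_1^\ast-\bar z_1)\|_2 + \eta \lesssim \sqrt{k}\,\|z_1^\ast-\bar z_1\|_2 + \eta \lesssim \eta$ (using RIP to bound $\|A_1 v\|_2 \leq \sqrt{k}\sqrt{1+\delta_{2s}}\,\|v\|_2$ on sparse $v$), and dividing by $\sigma_{\min}(A_2)\gtrsim\sqrt{k}$, yields $\|z_2^\ast - \bar z_2\|_2 \leq (c'/\sqrt{k})\,\eta$. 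Adding the two block estimates and setting $c_{partial}$ to be an appropriate combination of $c_{total}$ and the RIP-derived constants gives~(\ref{noisy-bound-partial}). The main obstacle, as noted, is the careful verification that the $\ell_1$-minimizer of the joint problem~(\ref{minl1noisy-partial}) is exactly captured by the projected fully sparse problem; everything else is a routine chaining of RIP inequalities and the fact that orthogonal projections do not increase the $\ell_2$-norm.
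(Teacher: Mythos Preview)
Your reduction via the orthogonal projector ${\cal P}$ onto $\RRR(A_2)^\bot$ is exactly the approach the paper intends: the paper does not give a self-contained proof here but defers to \cite[Theorem~5.2]{ABandeira_KScheinberg_LNVicente_2011-b}, stating only that the argument is ``analogous to the exact case'' --- i.e., the projection-based reduction of Section~\ref{sec:partialCS} that you are following. Your identification of the feasible-set equivalence as the crux is correct, and your argument for it is right: for any $z_1$ with $\|({\cal P}A_1)z_1 - {\cal P}b\|_2\le\eta$, the least-squares choice of $z_2$ gives $A_1z_1+A_2z_2-b={\cal P}(A_1z_1-b)$, so the joint constraint is met with slack zero, hence the $z_1$-components of the two minimizers coincide.

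One point to tighten: in the bound for the second block you invoke RIP to get $\|A_1(z_1^\ast-\bar z_1)\|_2\le\sqrt{k}\sqrt{1+\delta_{2s}}\,\|z_1^\ast-\bar z_1\|_2$ ``on sparse $v$'', but $v=z_1^\ast-\bar z_1$ is \emph{not} sparse in general. The fix is standard: the $\ell_1$-minimality of $z_1^\ast$ places $v$ in the descent cone of $\|\cdot\|_1$ at $\bar z_1$, so decomposing $v$ into blocks $T_0,T_1,T_2,\ldots$ of size $s-r$ (as in the Cand\`es stable-recovery proof) gives $\sum_{j\ge2}\|v_{T_j}\|_2\le\|v_{T_0\cup T_1}\|_2$, whence $\|A_1 v\|_2\le\sqrt{1+\delta_{2s}}\sqrt{k}\bigl(\|v_{T_0\cup T_1}\|_2+\sum_{j\ge2}\|v_{T_j}\|_2\bigr)\le 2\sqrt{1+\delta_{2s}}\sqrt{k}\,\|v\|_2$. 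Alternatively, bypass this entirely by using feasibility of both $(z_1^\ast,z_2^\ast)$ and $(\bar z_1,\bar z_2)$ in the joint constraint to get $\|A(z^\ast-\bar z)\|_2\le 2\eta$ directly, then apply the same cone decomposition to the full vector. Either way your conclusion stands with an adjusted universal constant; also note that what you are really applying to ${\cal P}A_1$ is the RIP-based stable-recovery bound behind Theorem~\ref{rauhutnoisy} (i.e., \cite{EJCandes_2009}), not Theorem~\ref{rauhutnoisy} itself, since ${\cal P}A_1$ is no longer a sampling matrix of a bounded orthonormal system.
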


Note that it is possible
to extend these results to  approximately sparse
vectors (see~\cite{ABandeira_KScheinberg_LNVicente_2011-b}), however we
do not  include such an extension in the present paper for the sake of clarity of
the exposition.

\subsection{Sparse recovery using polynomial orthonormal expansions}\label{seccaopolort}

As described in Section~\ref{chapter2}, we are  interested in
 recovering a local quadratic model of the objective function $\fo:D\subset\RR^n\to\RR$ near a point $x_0$. Therefore we consider  the space of quadratic functions defined in $B_p(x_0;\Delta)$. To apply the results in Theorems
\ref{rauhutuniforme} and \ref{rauhutnoisy} we need to build an appropriate orthonormal basis for the space of quadratic functions in $B_p(x_0;\Delta)$. In addition we
 require  that the models we recover are expected to be sparse in such a
 basis. In this paper we consider  models that reconstruct sparse Hessians
of  $\fo$, and thus it is natural to include into the basis polynomials of the forms
$c_{ij}\left(\vad_i-x_0\right)\left(\vad_j-x_0\right)$, with some constant
$c_{ij}$ (we will henceforth set $x_0=0$ in this section without lost of generality). It is then required that these elements of the basis do not  appear as parts of other basis polynomials.  The orthonormal basis should satisfy the $K$-boundedness condition for some constant $K$ independent\footnote{Otherwise the results in Theorems~\ref{rauhutuniforme} and~\ref{rauhutnoisy} become weaker. Recently, progress has been made in addressing the  case  when $K$ grows with the dimension, where the main idea is to precondition the interpolation matrix (see~\cite{HRauhut_RWard_2010}).}  of the dimension $n$.

We will now build such an orthonormal basis on the domain $\drau=B_\infty(0;\Delta)=[-\Delta,\Delta]^n$
(the $\ell_\infty$-ball centered at the origin and of radius $\Delta$),
using the uniform probability measure $\mu$ and the corresponding $L^2$ inner product.

\subsubsection{An orthonormal basis on hypercubes}\label{subsec:HC}

Let $\mu$ be the uniform probability measure on $B_\infty(0;\Delta)$. Note that due to the geometric properties of $B_\infty(0;\Delta)=[-\Delta,\Delta]^n$, one has
\begin{equation}\label{produtointegrais}\int_{[-\Delta,\Delta]^n}g(\vad_i)h(\vad_1,...,\vad_{i-1},\vad_{i+1},...,\vad_n)d\vad \; =\end{equation}
\begin{equation*}
= \; \int_{-\Delta}^\Delta g(\vad_i)d\vad_i\int_{[-\Delta,\Delta]^{n-1}}h(\vad_1,...,\vad_{i-1},\vad_{i+1},...,\vad_n)d\vad_1\cdots d\vad_{i-1}d\vad_{i+1}\cdots d\vad_n,
\end{equation*}
for appropriate functions $g$ and $h$ satisfying the conditions of Fubini's Theorem.

We want to find an orthonormal basis, with respect to $\mu$, of the second degree polynomials on $B_\infty(0;\Delta)$ that contains  the polynomials $\{c_{ij}\vad_i\vad_j\}_{i\neq j}$. We are considering, first, the off-diagonal part of the
 Hessian since this is the part which is expected to be sparse (indicating the lack of direct variable interactions). It is easy to see that  the $n(n-1)/2$ polynomial functions $\{c_{ij}\vad_i\vad_j\}_{i\neq j}$ are all orthogonal, and that
due to symmetry all $c_{ij}$ constants are equal, to say, $k_2$ (a normalizing constant). Hence we have $n(n-1)/2$ elements of the basis. Now, note that from~(\ref{produtointegrais}), for different indices $i,j,l$,
\[
\int_{B_\infty(0;\Delta)}\vad_i\vad_j\vad_ld\mu \; = \; \int_{B_\infty(0;\Delta)}\vad_i\vad_jd\mu \; = \; \int_{B_\infty(0;\Delta)}\vad_i\vad_j^2d\mu \; = \;0.
\]
As a result, we can add to the set $\{k_2\vad_i\vad_j\}_{i\neq j}$ the polynomials $\{k_1\vad_i\}_{1\leq i\leq n}$ and the polynomial~$k_0$, where $k_1$ and $k_0$ are normalizing constants, forming a set of $n(n-1)/2+(n+1)$ orthogonal polynomials.

It remains to construct $n$ quadratic polynomials, which have to contain terms
$\vad_i^2$ but should not contain terms $\vad_i\vad_j$. We choose to consider
 $n$ terms of the form $k_3(\vad_i^2-\alpha_1\vad_i-\alpha_0)$. We will select
 the constants $\alpha_0$ and $\alpha_1$ in such a way that these polynomials are orthogonal to the already constructed ones.
From the  orthogonality with respect to $k_i\vad_i$, i.e.,
$$\int_{B_\infty(0;\Delta)}\vad_i(\vad_i^2-\alpha_1\vad_i-\alpha_0)d\mu \; = \; 0,$$
we must have $\alpha_1=0$. Then, orthogonality with respect to the constant
 polynomial $k_0$ implies
$$\int_{B_\infty(0;\Delta)}(\vad_i^2-\alpha_0)d\mu \; = \; 0.$$
Thus,
$$\alpha_0 \; = \; \frac1{2\Delta}\int_{-\Delta}^\Delta \vad^2d\vad \; = \;
\frac1{2\Delta}\left(\frac23\Delta^3\right) \; = \; \frac13\Delta^2.$$

Hence we have a set of orthogonal polynomials that span the set of quadratic functions on $B_\infty(0;\Delta)$. What remains is  the computation of the normalization constants to ensure normality of basis elements. From
\begin{equation*}
\int_{B_\infty(0;\Delta)}k_0^2d\mu \; = \; 1
\end{equation*}
we set $k_0=1$. From the equivalent statements
\begin{eqnarray*}
\int_{B_\infty(0;\Delta)}\left(k_1\vad_i\right)^2d\mu \; = \; 1,\\
\frac{k_1^2}{(2\Delta)^n}\int_{-\Delta}^\Delta\vad^2d\vad\int_{[-\Delta,\Delta]^{n-1}}1d\vad \; = \; 1,\\
k_1^2\int_{-\Delta}^\Delta \vad^2\frac{d\vad}{2\Delta} \; = \; 1,
\end{eqnarray*}
we obtain $k_1=\sqrt{3}/\Delta$. From the equivalent statements
\begin{eqnarray*}
\int_{B_\infty(0;\Delta)}\left(k_2\vad_i\vad_j\right)^2d\mu \; = \; 1,\\
k_2^2\left(\int_{-\Delta}^\Delta \vad^2\frac{d\vad}{2\Delta}\right)^2 \; = \; 1,
\end{eqnarray*}
we conclude that $k_2=3/\Delta^2$. And from the equivalent statements
\begin{eqnarray*}
\int_{B_\infty(0;\Delta)}\left(k_3\left(\vad_i^2-\frac13\Delta^2\right)\right)^2d\mu \; = \; 1,\\
k_3^2\int_{-\Delta}^\Delta\left(\vad^2-\frac13\Delta^2\right)^2\frac{1}{2\Delta}d\vad \; = \; 1,
\end{eqnarray*}
we obtain
\[
k_3 \; = \; \frac{3\sqrt{5}}{2\Delta^2}.
\]
We have thus constructed the desirable basis, which we will denote by $\psi$.
We will  abuse the notation to define  $\psi$  using indices $(0),(1,i)$, $(2,ij)$ or $(2,i)$ for the elements of $\psi$ in place  of
the single index $\iota$. The expressions of these sophisticated indices should simplify the understanding. For instance, $(2,ij)$ index stands for the  element of the basis $\psi$ which involves the term $\vad_i\vad_j$, similarly $\alpha_{(2,i)}$  is the term corresponding to $\vad_i^2$,
and so on.

\begin{definition}\label{defbasehipercubo}
We define the basis $\psi$ as the set of the following $\dimquah$ polynomials:
\begin{equation}\label{basehipercubo}\left\{\begin{array}{ccl}
\psi_{2,i}(\vad)&=&\frac{3\sqrt{5}}{2\Delta^2}\vad_i^2-\frac{\sqrt{5}}2, \\
\psi_{2,ij}(\vad)&=&\frac3{\Delta^2}\vad_i\vad_j, \\
\psi_{1,i}(\vad)&=&\frac{\sqrt{3}}{\Delta}\vad_i, \\
\psi_{0}(\vad)&=&1. \\
\end{array}\right.\end{equation}\end{definition}

The basis $\psi$ satisfies the assumptions of Theorems~\ref{rauhutuniforme} and~\ref{rauhutnoisy}, as stated in the following theorem.
\begin{theorem}\label{teoremabasehipercubo}
The basis $\psi$ (see Definition~\ref{defbasehipercubo}) is orthonormal and satisfies the $K$-boundedness condition (see Definition~\ref{kboundedness}) in $B_\infty(0;\Delta)$ for the uniform probability measure with $K=3$.
\end{theorem}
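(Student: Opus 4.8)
The plan is to verify the two defining properties of a $K$-bounded orthonormal basis (Definition~\ref{kboundedness}) directly for the explicit polynomials listed in~(\ref{basehipercubo}): first orthonormality with respect to the uniform measure $\mu$ on $B_\infty(0;\Delta)$, and then the uniform bound $\|\psi_\iota\|_{L^\infty}\leq 3$. Since there are four families of basis elements ($\psi_{2,i}$, $\psi_{2,ij}$, $\psi_{1,i}$, and $\psi_0$), I would organize the orthogonality check into cases according to which two families the pair of elements comes from, exploiting the product structure~(\ref{produtointegrais}) of integration over the hypercube to reduce every multivariate integral to products of one-dimensional integrals of $1$, $\vad$, $\vad^2$, $\vad^3$, $\vad^4$ over $[-\Delta,\Delta]$ against $d\vad/(2\Delta)$.

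For orthogonality, the key observations are: (i) any basis element is (a constant times) a product of distinct-variable factors of the form $\vad_i$, $\vad_i^2$, or $\vad_i^2-\tfrac13\Delta^2$; (ii) whenever a variable $\vad_k$ appears to an odd total power in the product of two basis elements, the corresponding one-dimensional integral $\int_{-\Delta}^\Delta \vad^{\text{odd}}\,d\vad/(2\Delta)$ vanishes by symmetry, which kills most cross terms immediately (e.g., $\psi_{1,i}$ against $\psi_0$, $\psi_{2,i}$, or any $\psi_{2,jk}$; and $\psi_{2,ij}$ against $\psi_{2,kl}$ unless $\{i,j\}=\{k,l\}$); (iii) the only nontrivial orthogonality relations are those already engineered in the construction, namely $\psi_{2,i}\perp\psi_0$ and $\psi_{2,i}\perp\psi_{1,i}$, which hold because $\alpha_0=\tfrac13\Delta^2 = \frac{1}{2\Delta}\int_{-\Delta}^\Delta\vad^2\,d\vad$ and because $\int_{-\Delta}^\Delta \vad(\vad^2-\alpha_0)\,d\vad=0$ by oddness. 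Normalization then follows from the constants $k_0=1$, $k_1=\sqrt3/\Delta$, $k_2=3/\Delta^2$, $k_3=3\sqrt5/(2\Delta^2)$ computed in the preceding paragraphs, using $\frac{1}{2\Delta}\int_{-\Delta}^\Delta\vad^2\,d\vad=\tfrac13\Delta^2$ and $\frac{1}{2\Delta}\int_{-\Delta}^\Delta(\vad^2-\tfrac13\Delta^2)^2\,d\vad=\tfrac{4}{45}\Delta^4$.

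For the $K$-boundedness bound, I would maximize each basis element over $[-\Delta,\Delta]^n$: $|\psi_0|=1$; $|\psi_{1,i}|=\tfrac{\sqrt3}{\Delta}|\vad_i|\leq\sqrt3$; $|\psi_{2,ij}|=\tfrac{3}{\Delta^2}|\vad_i\vad_j|\leq 3$; and for $\psi_{2,i}(\vad)=\tfrac{3\sqrt5}{2\Delta^2}\vad_i^2-\tfrac{\sqrt5}{2}$, the map $t\mapsto\tfrac{3\sqrt5}{2}t^2-\tfrac{\sqrt5}{2}$ on $t=\vad_i/\Delta\in[-1,1]$ ranges over $[-\tfrac{\sqrt5}{2},\sqrt5]$, so $\|\psi_{2,i}\|_{L^\infty}=\sqrt5<3$. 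Taking the maximum over all families gives $K=3$ (attained by the off-diagonal elements $\psi_{2,ij}$), which is the claimed constant and, crucially, independent of $n$.

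I do not anticipate a genuine obstacle here — the theorem is essentially a bookkeeping verification of the construction that precedes it. The only mildly delicate point is being careful with the case analysis for orthogonality so that no pair of the $\dimquah$ basis elements is overlooked; organizing it by the type of each element and invoking the odd-power vanishing argument keeps this manageable. One should also note $\sqrt5\approx 2.236<3$, so the uniform bound is genuinely governed by the bilinear terms and the stated $K=3$ is the correct (tight) value.
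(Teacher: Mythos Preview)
Your proposal is correct and follows essentially the same approach as the paper: orthonormality is deduced from the preceding construction together with the product-integral identity~(\ref{produtointegrais}), and the $K$-boundedness is checked by computing $\|\psi_0\|_{L^\infty}=1$, $\|\psi_{1,i}\|_{L^\infty}=\sqrt3$, $\|\psi_{2,ij}\|_{L^\infty}=3$, and $\|\psi_{2,i}\|_{L^\infty}=\sqrt5$, exactly as you do. Your write-up is in fact more explicit than the paper's, which simply cites the derivation and lists these four sup-norms.
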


\begin{proof}
From the above derivation and~(\ref{produtointegrais}) one can easily show that $\psi$ is orthonormal in $B_\infty(0;\Delta)$ with respect to the uniform probability measure.
So, it remains to prove the boundedness condition with $K=3$. In fact, it is easy to check that
\begin{equation}\label{basehipercubobound}
\left\{\begin{array}{cccc}
\|\psi_{2,i}\|_{L^\infty(B_\infty(0;\Delta))}&=&\sqrt{5}&\leq3, \\
\|\psi_{2,ij}\|_{L^\infty(B_\infty(0;\Delta))}&=&3&\leq3, \\
\|\psi_{1,i}\|_{L^\infty(B_\infty(0;\Delta))}&=&\sqrt{3}&\leq3, \\
\|\psi_{0}\|_{L^\infty(B_\infty(0;\Delta))}&=&1&\leq3, \\
\end{array}\right .
\end{equation}
where $\|g\|_{L^\infty(B_\infty(0;\Delta))}=\max_{x\in B_\infty(0;\Delta)}|g(x)|$.
\end{proof}

We will consider $\psi_Q$, the subset of $\psi$ consisting of the
polynomials of degree $2$, and $\psi_L$, the ones of degree $1$ or $0$,
as we did in Section~\ref{chapter2} for $\bar{\phi}$.

We are interested in quadratic functions $\qua=\sum_{\iota}\coe_\iota\psi_\iota$ (see Definition~\ref{defbasehipercubo}) with an $h-$sparse  coefficient subvector
$\coe_Q$, i.e., only $h$ coefficients corresponding to the polynomials in~$\psi_Q$ in the representation of $\qua$ are non-zero, where $h$ is a number between $1$ and $n(n+1)/2$. In such cases, the corresponding full vector~$\coe$ of coefficients is $(h+n+1)-$sparse. We now state a corollary of Theorem~\ref{rauhutnoisy} for sparse recovery in the orthonormal basis $\psi$, with $k=p$ (number of sample points) and $N=q$ (number of elements in $\psi$), which will  be used in the next section to establish results on sparse quadratic model recovery. Note that we write the probability of successful recovery of a sparse solution
 in the form $1-n^{-\gamma\log p}$ which can be derived from~(\ref{epsilonoptimoK}) using $q=\OOO(n^2)$ and a simple modification of the universal constant $\gamma$.

\begin{corollary}\label{rauhutnoisyHC}
Let $M(\psi,\Pon)\in\RR^{p\times q}$ be the matrix
with  entries
$\left[M(\psi,\Pon)\right]_{ij}=\psi_j(\pon^i)$, $i=1,...,p$, $j=1,...,q$, with $q=\dimquah$.

Assume that the sample set $\Pon=\{\pon^1,...,\pon^p\}\subset B_\infty(0;\Delta)$ is chosen randomly where each point is drawn independently according to the
uniform probability  measure $\mu$ in $B_\infty(0;\Delta)$. Further assume that
$$\frac{p}{\log p} \; \geq \; 9c\left(h+n+1\right)\left(\log \left(h+n+1\right)\right)^2 \log q,$$
for some universal constant $c>0$ and $h\in\{1,...,n(n+1)/2\}$.
Then, with probability at least $1-n^{-\gamma\log p}$, for some universal constant $\gamma>0$, the following
holds for every vector $\bar z$, having at most $h+n+1$ non-zero expansion coefficients in the basis $\psi$:

Let noisy samples $b=M(\psi,\Pon) \bar{z}+\epsilon$ with
$$\|\epsilon\|_2 \; \leq \; \eta$$
be given, for any $\eta$ non-negative, and let $z^\ast$ be the solution of the $\ell_1$-minimization problem~(\ref{minl1noisy-partial}) with $A=M(\psi,\Pon) = ( M(\psi_Q,\Pon), M(\psi_L,\Pon) )$. Then,
$$\|z^\ast - \bar{z}\|_2 \; \leq \; \frac{c_{partial}}{\sqrt{p}}\,\eta$$
for some universal constant $c_{partial}>0$.
\end{corollary}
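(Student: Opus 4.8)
The plan is to obtain Corollary~\ref{rauhutnoisyHC} as a direct specialization of Theorem~\ref{th:noisy_partial} (the noisy partial-recovery theorem) combined with the orthonormality and boundedness of the basis $\psi$ established in Theorem~\ref{teoremabasehipercubo}. First I would observe that by Theorem~\ref{teoremabasehipercubo} the basis $\psi$ is orthonormal and $K$-bounded with $K=3$ on $B_\infty(0;\Delta)$ for the uniform measure $\mu$, so the hypotheses of Theorem~\ref{rauhutuniforme} are met with $K=3$. Next I would set the sparsity parameter in Theorem~\ref{rauhutuniforme} and Theorem~\ref{th:noisy_partial} to $s = h+n+1$ and the partial-recovery offset to $r = n+1$, matching the identification $z_1 = \alpha_Q$, $z_2 = \alpha_L$, $A_1 = M(\psi_Q,\Pon)$, $A_2 = M(\psi_L,\Pon)$, $N = q = \dimquah$, $k = p$. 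With these substitutions, condition~(\ref{kmaiorepsilonantes}) becomes
\[
\frac{p}{\log p} \;\geq\; c_1 \cdot 9 \cdot (h+n+1)\,(\log(h+n+1))^2 \log q,
\]
which is exactly the stated hypothesis with $c = c_1$ (absorbing the $K^2 = 9$ into the displayed constant). The $(s-r)$-sparsity of $z_1$ in Theorem~\ref{th:noisy_partial} translates into $\bar z_1 = \bar z_Q$ being $h$-sparse, i.e., $\bar z$ having at most $h + n + 1$ nonzero coefficients in $\psi$, as claimed; and the condition $r \leq s$ reads $n+1 \leq h+n+1$, which holds since $h \geq 1$.

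Then I would handle the probability bound. Theorem~\ref{th:noisy_partial} (via Theorem~\ref{rauhutuniforme}) gives success probability at least $1-\varepsilon$ whenever~(\ref{kmaiorepsilon}) holds, and the discussion following Theorem~\ref{rauhutuniforme} shows that choosing $\varepsilon$ to make~(\ref{kmaiorepsilon}) tight yields, via~(\ref{epsilonoptimoK}), a probability at least $1 - N^{-\gamma'\log k}$ for a universal $\gamma' = c_1/c_2$. Substituting $N = q = \OOO(n^2)$ and $k = p$, we have $q^{-\gamma'\log p} = n^{-(\gamma'\log p)(\log q / \log n)}$, and since $\log q / \log n \to 2$ (more precisely is bounded above and below by absolute constants for $n \geq 2$), this is at most $n^{-\gamma\log p}$ after redefining the universal constant $\gamma$, which is precisely the remark made just before the corollary statement. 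Finally, the error bound~(\ref{noisy-bound-partial}) of Theorem~\ref{th:noisy_partial}, namely $\|z^\ast - \bar z\|_2 \leq (c_{partial}/\sqrt{k})\,\eta$, transfers verbatim with $k = p$, giving $\|z^\ast - \bar z\|_2 \leq (c_{partial}/\sqrt{p})\,\eta$.

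The only genuinely non-routine points are bookkeeping ones: making sure the $K^2 = 9$ factor lands in the right place in the sample-complexity bound (so that the corollary's displayed constant $9c$ is consistent with Theorem~\ref{rauhutuniforme}'s $c_1 K^2$), and verifying that the change of base in the probability exponent from $q$ to $n$ only costs a redefinition of the universal constant. Neither is a real obstacle; the corollary is essentially a restatement of Theorem~\ref{th:noisy_partial} with the concrete basis $\psi$ plugged in. I would therefore write the proof as: "Apply Theorem~\ref{th:noisy_partial} with $K=3$ (Theorem~\ref{teoremabasehipercubo}), $s = h+n+1$, $r = n+1$, $N = q$, $k = p$; the hypothesis on $p/\log p$ is~(\ref{kmaiorepsilonantes}) with the constant $9c$ absorbing $c_1 K^2$; the probability statement follows from~(\ref{epsilonoptimoK}) together with $q = \OOO(n^2)$ after adjusting $\gamma$; and the error bound is~(\ref{noisy-bound-partial})."
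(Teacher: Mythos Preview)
Your proposal is correct and matches the paper's approach exactly: the paper does not give a formal proof of this corollary but simply states it as a direct specialization of the noisy (partial) recovery theorem with $K=3$ from Theorem~\ref{teoremabasehipercubo}, $k=p$, $N=q$, and notes that the probability is rewritten as $1-n^{-\gamma\log p}$ via~(\ref{epsilonoptimoK}) and $q=\OOO(n^2)$ after adjusting the universal constant. Your bookkeeping on the $9=K^2$ factor and the change of base in the probability exponent is precisely the content of the brief remarks the paper makes before stating the corollary.
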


\begin{remark}
It would be natural to consider the interpolation domain to be
the  ball $B_2(0;\Delta)$ in the classical $\ell_2$-norm.  However, our procedure of constructing an orthonormal set of polynomials with desired properties in the  hypercube, i.e., using the $\ell_\infty$-norm ball, does not extend naturally to the $\ell_2$ one.  One problem with the uniform measure in the $\ell_2$-ball is that
 formulas like~(\ref{produtointegrais}) no longer hold. A construction of an appropriate basis for the uniform measure on the $\ell_2$-ball is a subject for further work.
\end{remark}

\subsection{Recovery of a fully quadratic model of a function with sparse Hessian}\label{sec:43}

As we stated earlier our main interest in this paper is to recover
a fully quadratic model (see Definition~\ref{deffullyquadmodel})
of a twice continuously differentiable objective function
$\fo:D\to \RR$ near a point~$x_0$ using fewer than $(n+1)(n+2)/2$ sample points.
In other words, we want to show that for  a given function $f$ there exist
constants $\kappa_{ef}$, $\kappa_{eg}$, and $\kappa_{eh}$
such that, given any point $x_0$ and a radius~$\Delta$, we can build a model
based on a
random sample set of $p$ points (with $p< (n+1)(n+2)/2$) which, with high probability, is a fully quadratic model of $f$ on $B_p(x_0;\Delta)$ with respect to
the given constants $\kappa_{ef}$, $\kappa_{eg}$, and $\kappa_{eh}$. The number $p$ of sample points depends on the sparsity of the Hessian of the model that we are attempting to reconstruct.
Hence we need to make some assumption about the sparsity.
The simplest (and strongest) assumption we can make is that the function $f$ has a sparse Hessian at any point $x_0$.

\begin{assumption}[Hessian sparsity]\label{assumptionhessiansparse}
Assume that $\fo:D\to \RR$ satisfies Assumption~\ref{assumptionquadratica} and furthermore that for any given
$x_0\in D$ the Hessian $\nabla^2f(x_0)$ of $\fo$ at $x_0$ has at most $h$ non-zero entries, on or above the diagonal,
where $h$ is a number between $1$ and $n(n+1)/2$. If this is the case, then $\nabla^2f $  is said to be $h-$sparse.
\end{assumption}

The above assumption implies that for every $x_0\in D$ there exists a fully quadratic  second degree polynomial model $q_f$ of $\fo$ such that the Hessian $\nabla^2q_f$ is $h-$sparse,  from a fully quadratic class with $\kappa^\prime_{ef}$,  $\kappa_{eg}^\prime$, and $\kappa^\prime_{eh}$
equal to some multiples of the Lipschitz constant of~$\nabla^2f$.
The second order Taylor model at $x_0$ is, in particular, such a model.

However, we do not need this strong assumption to be able to construct fully quadratic models. Constructing models via random sample sets and $\ell_1$-minimization, in the way that we described above, provides fully quadratic models regardless of the amount of sparsity of the Hessian, as we will show in this section.
 The sparsity of the Hessian affects, however, the number of sample points that
are required.  Hence, one can consider functions whose Hessian is approximately sparse and the sparsity pattern, or even the cardinality
(number of non-zeros), is not constant. The following assumption is weaker than Assumption~\ref{assumptionhessiansparse} but is sufficient for our purposes.

\begin{assumption}[Approximate Hessian sparsity]\label{assumptionhessiansparse-app}
Assume that $\fo:D\to \RR$ satisfies Assumption~\ref{assumptionquadratica} and furthermore that for any given $x_0\in D$ and $\Delta >0$
 there exists a second degree polynomial
 $\qua(\vad)=\sum_{\iota}\alpha_\iota \psi_\iota(\vad)=\alpha_Q\psi_Q(\vad)+\alpha_L\psi_L(\vad)$, with $\alpha_Q$ an $h-$sparse coefficient vector, where $h$ may depend on $x_0$ and $\Delta$, which is a fully quadratic  model  of $\fo$ on $B_p(x_0;\Delta)$ for some constants $\kappa^\prime_{ef}$, $\kappa_{eg}^\prime$, and $\kappa^\prime_{eh}$, independent of $x_0$ and $\Delta$.
\end{assumption}

If in the above assumption $h$ is independent of $x_0$ and $\Delta$,
then the assumption reduces to Assumption~\ref{assumptionhessiansparse}.
As it stands, Assumption~\ref{assumptionhessiansparse-app} is less restrictive.

Given the result in Section~\ref{seccaopolort}, we will consider the $\ell_\infty$-norm in Definition~\ref{deffullyquadmodel}, thus considering regions of the form $B_\infty(x_0;\Delta)$.

When we state that $\fo$ has a sparse Hessian, it is understood that
the representation of  the Taylor second order expansion, or of any other
fully quadratic model of $\fo$, is a sparse linear combination of the elements of the canonical basis $\bar{\phi}$ (see~(\ref{basecanonicaigualdade})).
However, the basis~$\bar{\phi}$ is not  orthogonal on $B_\infty(x_0;\Delta)$. Hence we are interested in models that have a sparse representation in the orthonormal basis~$\psi$ of Definition~\ref{defbasehipercubo}.
Fortunately,  basis $\psi$ can be obtained from $\bar\phi$   through a few simple transformations. In particular, the sparsity  of the Hessian of a quadratic model $\qua$ will be carried over to sparsity in the representation of $\qua$ in $\psi$, since the expansion coefficients in $\psi_Q$ will be multiples of the ones in~$\bar{\phi}_Q$, thus guaranteeing that if the coefficients in the latter are $h-$sparse, so are the ones in the former.

We are now able to use the material developed in Section~\ref{seccaopolort} to
guarantee, with high probability, the construction, for each $x_0$ and $\Delta$, of a fully quadratic model of $\fo$ in $B_\infty(x_0;\Delta)$ using a
random sample set of only $\OOO(n(\log n)^4)$ points, instead of  $\OOO(n^2)$ points, provided that
$h = \OOO(n)$ (for the given $x_0$ and $\Delta$, see Assumption~\ref{assumptionhessiansparse-app}).
We find such a fully quadratic model by solving the partially
sparse recovery version of problem~(\ref{minl1noisy})
 written now in the form
\begin{equation}\label{minl1HC0}\begin{array}{cl}
\min &\|\coe_Q\|_1\\ [1ex]
\operatorname{s.t.} &\left\|M(\psi_Q,\Pon)\coe_Q+M(\psi_L,\Pon)\coe_L-\fo(\Pon + x_0)\right\|_2 \; \leq \; \eta, \end{array}
\end{equation}
where $\eta$ is some appropriate positive quantity and $Y$ is drawn in $B_\infty(0;\Delta)$.
Corollary~\ref{rauhutnoisyHC} can then be used to ensure  that only $\OOO(n(\log n)^4)$ points are
necessary for recovery of a sparse model in $B_\infty(0;\Delta)$, when the number of non-zero
components of the Hessian of~$m$ in Assumption~\ref{assumptionhessiansparse-app} is of the order of $n$.

Note that we are in fact considering ``noisy'' measurements, because we are only able to evaluate the function $\fo$ while trying to recover a fully quadratic model, whose values are somewhat different from those of $\fo$.
We will say that a function $q^\ast$ is the solution to the minimization problem~(\ref{minl1HC0}) if $q^\ast(\vad)=\sum_{\iota}\alpha^\ast_l\psi_\iota(\vad)$, where $\alpha^\ast$ is the minimizer of~(\ref{minl1HC0}).

First we need to prove an auxiliary lemma.
Corollary~\ref{rauhutnoisyHC}  provides an estimate on the $\ell_2$-norm of the error in the recovered
 vector of coefficients
of the quadratic model. In the definition of fully quadratic models, the error between the quadratic model and the function $\fo$
is measured in terms of the maximum difference of
 their function values in $B_\infty(x_0;\Delta)$ and the maximum norms of the differences of their gradients  and their Hessians in
 $B_\infty(x_0;\Delta)$. The following lemma establishes a bound for the value, gradient, and Hessian of quadratic polynomials in $B_\infty(0;\Delta)$
 in terms of the norm of their coefficient vector (using the basis~$\psi$).

\begin{lemma}\label{quadraticL2errors}
Let $\qua$ be a quadratic function and  $\alpha$ be a vector in $\RR^{\dimquah}$ such that
$$\qua(\vad) \; = \; \sum_{\iota}\alpha_\iota\psi_\iota(\vad)$$  with $\psi(x)$ defined in~(\ref{basehipercubo}).
Then
\begin{eqnarray*}
\left|\qua(\vad)\right|&\leq&\left(3\sqrt{\card(\alpha)}\right)\|\alpha\|_2 \label{quadraticL2Bound0}\\
\left\|\nabla\qua(\vad)\right\|_2&\leq&\left(3\sqrt{5}\sqrt{\card(\alpha)}\right)\frac1\Delta\|\alpha\|_2\label{quadraticL2Bound1}\\
\left\|\nabla^2\qua(\vad)\right\|_2&\leq&\left(3\sqrt{5}\sqrt{\card(\alpha)}\right)\frac1{\Delta^2}\|\alpha\|_2,\label{quadraticL2Bound2}
\end{eqnarray*}
for all $\vad\in B_\infty(0;\Delta)$, where $\card(\alpha)$ is the number of non-zero elements in $\alpha$.
\end{lemma}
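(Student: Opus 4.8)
The plan is to bound each of the three quantities pointwise on $B_\infty(0;\Delta)$ by expanding $\qua=\sum_\iota\alpha_\iota\psi_\iota$, applying the triangle inequality, estimating $|\psi_\iota(\vad)|$, $\|\nabla\psi_\iota(\vad)\|_2$, and $\|\nabla^2\psi_\iota(\vad)\|_2$ uniformly over $B_\infty(0;\Delta)$, and then converting the resulting $\ell_1$-norm of $\alpha$ into $\sqrt{\card(\alpha)}\,\|\alpha\|_2$ by Cauchy--Schwarz (using that $\alpha$ has at most $\card(\alpha)$ nonzero entries).

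For the value bound I would invoke the $K$-boundedness proved in Theorem~\ref{teoremabasehipercubo}, namely $\|\psi_\iota\|_{L^\infty(B_\infty(0;\Delta))}\le 3$ for every $\iota$. Then $|\qua(\vad)|=|\sum_\iota\alpha_\iota\psi_\iota(\vad)|\le\sum_\iota|\alpha_\iota|\,|\psi_\iota(\vad)|\le 3\|\alpha\|_1\le 3\sqrt{\card(\alpha)}\,\|\alpha\|_2$, which is the first inequality.

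For the gradient and Hessian I would differentiate the four families in Definition~\ref{defbasehipercubo}: $\nabla\psi_{2,i}(\vad)=\tfrac{3\sqrt5}{\Delta^2}\vad_i e_i$, $\nabla\psi_{2,ij}(\vad)=\tfrac{3}{\Delta^2}(\vad_j e_i+\vad_i e_j)$, $\nabla\psi_{1,i}(\vad)=\tfrac{\sqrt3}{\Delta}e_i$, and $\nabla\psi_0=0$. On $B_\infty(0;\Delta)$ one has $|\vad_i|\le\Delta$, so the Euclidean norms of these gradients are at most $\tfrac{3\sqrt5}{\Delta}$, $\tfrac{3\sqrt2}{\Delta}$, $\tfrac{\sqrt3}{\Delta}$, and $0$ respectively, hence at most $\tfrac{3\sqrt5}{\Delta}$ in all cases. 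Similarly $\nabla^2\psi_{2,i}=\tfrac{3\sqrt5}{\Delta^2}e_ie_i^\top$, $\nabla^2\psi_{2,ij}=\tfrac{3}{\Delta^2}(e_ie_j^\top+e_je_i^\top)$ (whose spectral norm is $\tfrac{3}{\Delta^2}$, its nonzero eigenvalues being $\pm\tfrac{3}{\Delta^2}$), and the Hessians of the degree-$\le 1$ elements vanish, so $\|\nabla^2\psi_\iota(\vad)\|_2\le\tfrac{3\sqrt5}{\Delta^2}$ for every $\iota$. Applying the triangle inequality to $\nabla\qua=\sum_\iota\alpha_\iota\nabla\psi_\iota$ and $\nabla^2\qua=\sum_\iota\alpha_\iota\nabla^2\psi_\iota$, followed by the same Cauchy--Schwarz step, gives the remaining two bounds with the constant $3\sqrt5$.

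There is no genuine obstacle: the argument is a direct computation. The only points requiring mild care are computing the Euclidean norm of $\nabla\psi_{2,ij}$ correctly (it is $\tfrac{3}{\Delta^2}\sqrt{\vad_i^2+\vad_j^2}\le\tfrac{3\sqrt2}{\Delta}$, rather than an $|\vad_i|+|\vad_j|$-type bound) and observing that $3\sqrt5$ dominates the constants $3\sqrt2$, $\sqrt3$, and $3$ arising from the other basis types, so that a single uniform constant suffices in each of the three estimates.
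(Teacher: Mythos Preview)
Your proof is correct and follows the same overall strategy as the paper: triangle inequality, uniform bounds on the basis elements (and their derivatives) over $B_\infty(0;\Delta)$, and the Cauchy--Schwarz step $\|\alpha\|_1\le\sqrt{\card(\alpha)}\,\|\alpha\|_2$. The only organizational difference is that for the gradient and Hessian you bound $\|\nabla\psi_\iota\|_2$ and $\|\nabla^2\psi_\iota\|_2$ directly for each basis element and then apply the triangle inequality in the vector/matrix norm, whereas the paper first bounds the individual partial derivatives $|\partial m/\partial x_i|$ (respectively the Hessian entries) and aggregates via $\|\cdot\|_2\le\|\cdot\|_1$ (respectively $\|\cdot\|_2\le\|\cdot\|_F$). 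Your route is slightly cleaner---it avoids the double appearance of each off-diagonal coefficient $\alpha_{2,ij}$ that the paper's componentwise sum incurs (yielding $6/\Delta$ there versus your $3\sqrt2/\Delta$)---but both arguments arrive at the same constant $3\sqrt5$, since that constant, coming from the pure-square elements $\psi_{2,i}$, dominates in either accounting.
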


\begin{proof}
We will again use the  indices $(0),(1,i)$, $(2,ij)$ or $(2,i)$ for the elements of $\alpha$ in correspondence to the indices used in Definition \ref{defbasehipercubo}.

From the $K$-boundedness conditions~(\ref{basehipercubobound}) we have
$$|\qua(\vad)| \; \leq \; \sum_{\iota}|\alpha_\iota| |\psi_\iota(\vad)| \; \leq \; 3\|\alpha\|_1
\; \leq \; 3\sqrt{\card(\alpha)}\|\alpha\|_2,$$
for all $\vad\in B_\infty(0;\Delta)$.
Also, from~(\ref{basehipercubo}),
\begin{eqnarray*}
\left| \frac{\partial\qua}{\partial \vad_i}(\vad) \right| &\leq&\sum_{\iota}|\alpha_\iota|\left |\frac{\partial\psi_\iota}{\partial \vad_i}\right |\\
&=&|\alpha_{1,i}|\left |\frac{\sqrt{3}}{\Delta}\right |+\sum_{j\in \{1,\ldots,n\} \setminus\{i\}}|\alpha_{2,ij}|\left |\frac3{\Delta^2}\vad_j\right | +\, |\alpha_{2,i}|\left |\frac{3\sqrt{5}}{\Delta^2}\vad_i\right |\\
&\leq &\frac{\sqrt{3}}{\Delta}|\alpha_{1,i}|+\sum_{j\in \{1,\ldots,n\} \setminus\{i\}}\frac3{\Delta}|\alpha_{2,ij}|+\frac{3\sqrt{5}}{\Delta}|\alpha_{2,i}|.
\end{eqnarray*}
Then, by the known relations between the norms $\ell_1$ and $\ell_2$,
\begin{eqnarray*}
\left\|\nabla\qua(\vad)\right\|_2 \; \leq \; \left\|\nabla\qua(\vad)\right\|_1 & \leq &
\sum_{i=1}^n \left( \frac{\sqrt{3}}{\Delta}|\alpha_{1,i}|+\sum_{j\in \{1,\ldots,n\}
\setminus\{i\}}\frac3{\Delta}|\alpha_{2,ij}|+\frac{3\sqrt{5}}{\Delta}|\alpha_{2,i}| \right) \\
& \leq & \sum_{i=1}^n \frac{\sqrt{3}}{\Delta}|\alpha_{1,i}|+\sum_{i,j \in \{1,\ldots,n\},
j > i}\frac6{\Delta}|\alpha_{2,ij}|+ \sum_{i=1}^n \frac{3\sqrt{5}}{\Delta}|\alpha_{2,i}| \\
& \leq & \left(3\sqrt{5}\sqrt{\card(\alpha)}\right)\frac1{\Delta}\|\alpha\|_1,
\end{eqnarray*}
for all $\vad\in B_\infty(0;\Delta)$.

For the estimation of the Hessian, we need to separate the diagonal from the non-diagonal part. For the non-diagonal part, with $i\neq j$,
\[
\left| \frac{\partial^2\qua}{\partial \vad_i\partial \vad_j}(\vad) \right|
\; \leq \; \sum_{\iota}|\alpha_\iota|\frac{\partial^2\psi_\iota(\vad)}{\partial \vad_i\partial \vad_j}
\; \leq \; |\alpha_{2,ij}|\frac3{\Delta^2}.
\]
For the diagonal part, with $i=1,\ldots, n$,
\[
\left|
\frac{\partial^2\qua}{\partial \vad_i^2}(\vad) \right| \; \leq \; \sum_{\iota}|\alpha_\iota|
\frac{\partial^2\psi_\iota(\vad)}{\partial \vad_i^2}
\; \leq \; |\alpha_{2,i}|\frac{3\sqrt{5}}{\Delta^2}.
\]
Since the upper triangular part of the Hessian has at most $\card(\alpha)$ non-zero components one has
\begin{eqnarray*}
\left\|\nabla^2\qua(\vad)\right\|_F
& = &
\sqrt{ \sum_{i,j \in \{1,\ldots,n\}} \left( \frac{\partial^2\qua}{\partial \vad_i\partial \vad_j}(\vad) \right)^2}
\\
& \leq & \sum_{i,j \in \{1,\ldots,n\}} \left| \frac{\partial^2\qua}{\partial \vad_i\partial \vad_j}(\vad) \right| \\
& \leq & \sum_{i,j \in \{1,\ldots,n\}, j>i} |\alpha_{2,ij}|\frac6{\Delta^2} +
\sum_{i \in \{1,\ldots,n\}} |\alpha_{2,i}|\frac{3\sqrt{5}}{\Delta^2} \\
& \leq & \sqrt{\card(\alpha)} \left( \frac{3\sqrt{5}}{\Delta^2} \right) \|\alpha\|_2.
\end{eqnarray*}
Thus,
$$ \left\|\nabla^2\qua(\vad)\right\|_2 \; \leq \;
\left\|\nabla^2\qua(\vad)\right\|_F \; \leq \; \sqrt{\card(\alpha)}
\left( \frac{ 3\sqrt{5} }{\Delta^{2} } \right) \|\alpha\|_2$$
for all $\vad\in B_\infty(0;\Delta)$.
\end{proof}

\begin{remark}
The dependency of the error bounds  in Lemma~\ref{quadraticL2errors}   on ${\card(\alpha)}$ cannot be eliminated. In fact, the quadratic function
$$\frau(\vad) \; = \; \sum_{i,j \in \{1,\ldots,n\}, j>i } \sqrt{\frac{2}{n(n-1)}}\frac3{\Delta^2}\vad_i\vad_j$$
satisfies  $\frau(\Delta,...,\Delta)= 3 \sqrt{(n(n-1))/2} = 3 \sqrt{\card(\alpha)}$ while the vector of coefficients~$\alpha$ has norm equal to $1$.
\end{remark}

\begin{remark}
 Since $\psi$ is orthonormal (with respect to $\mu$) on $B_\infty(0;\Delta)$
 we have that
$\|\alpha\|_2=\|\qua\|_{L^2(B_\infty(0;\Delta),\mu)}$. Hence the $\ell_2$-norm of the vector of the coefficients is simply the $L^2$-norm of the function $\qua$ over $B_\infty(0;\Delta)$. If we now consider $\|\qua\|_{L^\infty(B_\infty(0;\Delta))}$, the $L^\infty$-norm of $\qua$ which is the maximum absolute value of $\qua(\vad)$ over $B_\infty(0;\Delta)$, we see that Lemma~\ref{quadraticL2errors} establishes a relation between two norms of $\qua$. By explicitly deriving constants in terms of $\card(\alpha)$ we strengthen the bounds in  Lemma~\ref{quadraticL2errors} for the cases of sparse models.
\end{remark}

We are now ready to present our main result.

\begin{theorem}\label{maintheorem}
Let Assumption~\ref{assumptionhessiansparse-app} hold (approximate Hessian sparsity).
Given $\Delta$ and $x_0$, let~$h$ be the corresponding sparsity level of the fully quadratic model
guaranteed by Assumption~\ref{assumptionhessiansparse-app}.
Let  $\Pon=\{\pon^1,...,\pon^p\}$ be a given set of $p$ random points, chosen with respect to the uniform measure in $B_\infty(0;\Delta)$, with
\begin{equation}\label{maintheoremcond}\frac{p}{\log p} \; \geq \; 9c_{partial}\left(h+n+1\right)(\log\left(h+n+1\right))^2\log q,\end{equation}
for some universal constant $c_{partial}>0$,  then with probability larger than $1-n^{-\gamma\log p}$,
for some universal constant $\gamma>0$, the quadratic $m^*(x)=\tilde{m}^*(x-x_0)$, where~$\tilde{m}^*$ is
the solution to the $\ell_1$-minimization problem~(\ref{minl1HC0}),
is a fully quadratic model of $\fo$  on $B_\infty(x_0;\Delta)$ with
 $\nu_2^{m^*}=0$ and constants $\kappa_{ef}$, $\kappa_{eg}$, and $\kappa_{eh}$ not depending on $x_0$ and~$\Delta$.
\end{theorem}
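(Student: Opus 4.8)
The plan is to regard the sparse fully quadratic model $\bar m$ guaranteed by Assumption~\ref{assumptionhessiansparse-app} as the ground-truth sparse signal of a \emph{noisy} partially-sparse recovery problem, use Corollary~\ref{rauhutnoisyHC} to bound the $\ell_2$-error of the recovered coefficient vector, and then convert that coefficient error into value, gradient, and Hessian error bounds via Lemma~\ref{quadraticL2errors}.

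First I would translate to $x_0=0$, writing $\tilde f(y)=f(y+x_0)$, so that it suffices to show $\tilde m^*$ is fully quadratic for $\tilde f$ on $B_\infty(0;\Delta)$. By Assumption~\ref{assumptionhessiansparse-app} there is a quadratic $\bar m=\sum_\iota\bar\alpha_\iota\psi_\iota$ with $\bar\alpha_Q$ being $h$-sparse --- so $\bar\alpha$ is $(h+n+1)$-sparse --- that is fully quadratic for $\tilde f$ on $B_\infty(0;\Delta)$ with constants $\kappa'_{ef},\kappa'_{eg},\kappa'_{eh}$ not depending on $x_0,\Delta$. Writing the sample vector $b=(\tilde f(y^i))_{i=1}^p$, we have $b=M(\psi,Y)\bar\alpha+\epsilon$ with $\epsilon_i=\tilde f(y^i)-\bar m(y^i)$; since each $y^i\in B_\infty(0;\Delta)$, full quadraticity of $\bar m$ gives $|\epsilon_i|\le\kappa'_{ef}\Delta^3$, hence $\|\epsilon\|_2\le\sqrt p\,\kappa'_{ef}\Delta^3$. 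I would therefore run problem~(\ref{minl1HC0}) with $\eta=\sqrt p\,\kappa'_{ef}\Delta^3$, which makes $\bar\alpha$ feasible. Since~(\ref{maintheoremcond}) matches the hypothesis of Corollary~\ref{rauhutnoisyHC}, that corollary applies to $\bar z=\bar\alpha$ and yields, with probability at least $1-n^{-\gamma\log p}$, that the minimizer $\alpha^*$ of~(\ref{minl1HC0}) obeys
$$\|\alpha^*-\bar\alpha\|_2 \;\le\; \frac{c_{partial}}{\sqrt p}\,\eta \;=\; c_{partial}\,\kappa'_{ef}\,\Delta^3 ,$$
the decisive point being that the factor $1/\sqrt p$ in the corollary (which reflects that it is $M(\psi,Y)/\sqrt p$, not $M(\psi,Y)$, that has the RIP) exactly cancels the $\sqrt p$ placed in $\eta$, leaving a coefficient-error bound independent of $p$.

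It then remains to translate this into the errors required by Definition~\ref{deffullyquadmodel}. Setting $e=\tilde m^*-\bar m=\sum_\iota(\alpha^*_\iota-\bar\alpha_\iota)\psi_\iota$, whose coefficient vector lies in $\RR^q$ so that $\card(\alpha^*-\bar\alpha)\le q=\OOO(n^2)$, Lemma~\ref{quadraticL2errors} gives, for all $x\in B_\infty(0;\Delta)$,
\begin{eqnarray*}
|e(x)| &\le& 3\sqrt{q}\,\|\alpha^*-\bar\alpha\|_2 \;\le\; 3\sqrt{q}\,c_{partial}\kappa'_{ef}\,\Delta^3,\\
\|\nabla e(x)\|_2 &\le& \tfrac{3\sqrt{5}\sqrt{q}}{\Delta}\,\|\alpha^*-\bar\alpha\|_2 \;\le\; 3\sqrt{5}\sqrt{q}\,c_{partial}\kappa'_{ef}\,\Delta^2,\\
\|\nabla^2 e(x)\|_2 &\le& \tfrac{3\sqrt{5}\sqrt{q}}{\Delta^2}\,\|\alpha^*-\bar\alpha\|_2 \;\le\; 3\sqrt{5}\sqrt{q}\,c_{partial}\kappa'_{ef}\,\Delta;
\end{eqnarray*}
note the powers of $\Delta$ fall out as $\Delta^3,\Delta^2,\Delta$, exactly matching Definition~\ref{deffullyquadmodel}. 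A triangle inequality against the full-quadraticity bounds for $\bar m$, e.g. $|\tilde f(x)-\tilde m^*(x)|\le|\tilde f(x)-\bar m(x)|+|e(x)|\le(1+3\sqrt{q}\,c_{partial})\kappa'_{ef}\Delta^3$ and likewise for $\nabla$ and $\nabla^2$, produces the three inequalities with $\kappa_{ef},\kappa_{eg},\kappa_{eh}$ equal to products of universal constants, the $\kappa'$'s and $\sqrt{q}$, none depending on $x_0$ or $\Delta$. Undoing the translation ($m^*(x)=\tilde m^*(x-x_0)$, and $x\in B_\infty(x_0;\Delta)\Leftrightarrow x-x_0\in B_\infty(0;\Delta)$) transfers everything to $f$ and $m^*$ on $B_\infty(x_0;\Delta)$, and $\nu_2^{m^*}=0$ because $\nabla^2 m^*$ is constant.

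The routine part is the three-norm bookkeeping through Lemma~\ref{quadraticL2errors}; the one genuinely delicate choice is $\eta=\sqrt p\,\kappa'_{ef}\Delta^3$, which must be simultaneously large enough to keep $\bar\alpha$ feasible in~(\ref{minl1HC0}) and small enough that, after the $1/\sqrt p$ of Corollary~\ref{rauhutnoisyHC}, the recovered coefficients differ from $\bar\alpha$ by only $\OOO(\Delta^3)$ with a constant free of $p$, $x_0$, and $\Delta$ --- this is exactly what makes the resulting $\kappa$'s independent of $x_0$ and $\Delta$.
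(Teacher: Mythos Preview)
Your proof is correct and follows essentially the same route as the paper's: treat the sparse fully quadratic model from Assumption~\ref{assumptionhessiansparse-app} as the ground-truth signal, set $\eta=\sqrt{p}\,\kappa'_{ef}\Delta^3$ so that the $1/\sqrt{p}$ from Corollary~\ref{rauhutnoisyHC} cancels, apply Lemma~\ref{quadraticL2errors} to the difference of coefficient vectors (bounding $\card(\alpha^*-\bar\alpha)$ by $q$ just as the paper bounds it ``independently of $x_0$''), and finish with the triangle inequality. Your identification of the $\sqrt{p}$ cancellation as the delicate point is exactly right.
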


\begin{proof}
From Assumption~\ref{assumptionhessiansparse-app},
 there exists  a fully quadratic model  $\qua$ for $\fo$ on $B_\infty(x_0;\Delta)$ with $\nu_2^m=0$ and some
 constants $\kappa'_{ef}$, $\kappa'_{eg}$, and $\kappa'_{eh}$.
The quadratic polynomial $\tilde{m}(z) = \qua(z+x_0)$, $z \in B_\infty(0;\Delta)$,
satisfies the assumptions of Corollary~\ref{rauhutnoisyHC} and, for the purpose
of the proof, is the quadratic that will be approximately recovered. Now, since $\qua$ is a fully quadratic model,
we have $|\fo(\pon^i+x_0)-\qua(\pon^i+x_0)|\leq \kappa'_{ef}\Delta^{3}$, $y^i \in B_\infty(0;\Delta)$. Therefore
\[ \|\fo(\Pon + x_0)-\qua(\Pon + x_0)\|_2\; \leq \; \sqrt{p}\,\kappa'_{ef}\Delta^3,\]
for any sample set~$Y \subset B_\infty(0;\Delta)$ (where $\kappa'_{ef}$ is independent of~$x_0$ and~$\Delta$).
Note that one can only recover $\qua$ approximately given that the values of $\qua(\Pon+x_0)\simeq\fo(\Pon+x_0)$ are `noisy'.

Consider again $\tilde{m}(z) = m(z+x_0)$,  $z \in B_\infty(0;\Delta)$.
Then, by Corollary~\ref{rauhutnoisyHC}, with probability larger than $1-n^{-\gamma\log p}$,
for a universal constant $\gamma>0$, the solution~$\tilde{m}^\ast$ to the
$\ell_1$-minimization problem~(\ref{minl1HC0}) with $\eta=\sqrt{p}\,\kappa'_{ef}\Delta^3$ satisfies
$$\|\alpha^\ast-\alpha\|_2 \; \leq \; c_{partial} \kappa'_{ef}\Delta^3,$$
where $\coe^\ast$ and $\coe$ are the coefficients of $\tilde{m}^\ast$ and $\tilde{m}$ in the basis $\psi$ given by~(\ref{basehipercubo}), respectively.
Note that $c_{partial}$ does not depend on~$x_0$.
So, by Lemma~\ref{quadraticL2errors},
\begin{eqnarray*}
\left|\tilde{m}^\ast(z)-\tilde{m}(z)\right|&\leq&c_{partial} \left(3\sqrt{\card(\alpha^\ast-\alpha)}\right)\kappa'_{ef}\Delta^3,\\
\left\|\nabla \tilde{m}^\ast(z)-\nabla \tilde{m}(z)\right\|_2&\leq&c_{partial} \left(3\sqrt{5}\sqrt{\card(\alpha^\ast-\alpha)}\right)\kappa'_{ef}\Delta^2,\\
\left\|\nabla^2 \tilde{m}^\ast(z)-\nabla^2 \tilde{m}(z)\right\|_2&\leq&c_{partial} \left(3\sqrt{10}\sqrt{\card(\alpha^\ast-\alpha)}\right)\kappa'_{ef}\Delta,
\end{eqnarray*}
for all $z \in B_\infty(0;\Delta)$.
Note that $\coe^\ast$ and $\coe$ depend on $x_0$ but $\card(\alpha^\ast-\alpha)$ can be easily bounded
independently of $x_0$.
Let now $m^\ast(x) = \tilde{m}^\ast(x-x_0)$ for $x \in B_\infty(x_0;\Delta)$.
Therefore, from the fact that $m$ is fully quadratic (with constants $\kappa'_{ef}$, $\kappa'_{eg}$, and $\kappa'_{eh}$), one has
\begin{eqnarray*}
\left|\qua^\ast(x)-\fo(x)\right|&\leq&\left(c_{partial}\left(3\sqrt{\card(\alpha^\ast-\alpha)}\right)\kappa'_{ef}+\kappa'_{ef}\right)\Delta^3,\\
\left\|\nabla\qua^\ast(x)-\nabla\fo(x)\right\|_2&\leq&\left(c_{partial}\left(3\sqrt{5}\sqrt{\card(\alpha^\ast-\alpha)}\right)\kappa'_{ef}+\kappa'_{eg}\right)\Delta^2,\\
\left\|\nabla^2\qua^\ast(x)-\nabla^2\fo(x)\right\|_2&\leq&\left(c_{partial}\left(3\sqrt{5}\sqrt{\card(\alpha^\ast-\alpha)}\right)\kappa'_{ef}+\kappa'_{eh}\right)\Delta,
\end{eqnarray*}
for all $x \in B_\infty(x_0;\Delta)$.

Since $\qua^\ast$ is a quadratic function, its Hessian is Lipschitz continuous with Lipschitz constant~$0$,
so one has that $\nu_2^{m^*}=0$. Hence $\qua^\ast$ is a fully quadratic model of $\fo$ on $B_\infty(x_0;\Delta)$.
\end{proof}

Note that the result of Theorem~\ref{maintheorem} is obtained for a number $p$ of sampling points satisfying (see~(\ref{maintheoremcond}) and recall that $q=\OOO(n^2)$)
\[
\frac{p}{\log p} \; = \; \OOO(n(\log n)^3)
\]
when $h=\OOO(n)$, i.e., when the number of non-zero elements of the Hessian of $\fo$ at~$x_0$ is of the order of $n$. Since $p<\dimquah$, one obtains
\begin{equation}\label{kigualOlog4nn}
p \; = \; \OOO\left(n(\log n)^4\right).
\end{equation}

Theorem~\ref{maintheorem} does not directly assume Hessian sparsity of $\fo$.
It is worth observing again that Theorem~\ref{maintheorem} can be established
under no conditions on the sparsity {\em pattern} of the Hessian of $\fo$.

Problem~(\ref{minl1HC0}) is a second order cone programming problem~\cite{AlizadehGoldfarb2003} and can, hence, be solved in polynomial time.
However it is typically easier in practice to solve linear programming problems.
Since the second order Taylor model $\tay$ satisfies
$\left\|\tay(\Pon+x_0)-\fo(\Pon+x_0)\right\|_\infty\leq\eta/\sqrt{p}$
(where $\eta=\sqrt{p}\,\kappa'_{ef} \Delta^3$), because $\tay$ is fully quadratic for $\fo$, instead of~(\ref{minl1HC0}), one can consider
\[\begin{array}{ll}
\min & \left\|\coe_Q^\qua\right\|_1\\ [1ex]
\operatorname{s.t.} & \left\|M(\psi_Q,\Pon)\coe_Q^\qua+M(\psi_L,\Pon)\coe_L^\qua-\fo(\Pon+x_0)\right\|_\infty \; \leq \; \frac1{\sqrt{p}}\eta,
\end{array}\]
 which is a linear program. In our implementation, as we will discuss in the next section, we chose
to  impose the interpolation constraints exactly which corresponds to setting~$\eta=0$ in the above formulations, hence simplifying
 parameter choices. Also, recent work has provided some insight
 for  why this choice works well (see~\cite{PWojtaszczyk_2010}).

Theorem~\ref{maintheorem}
cannot strictly validate a practical setting in DFO like the one discussed in the next section.
It serves to  provide motivation and insight
on the use of $\ell_1$-minimization to build underdetermined
quadratic models for functions with sparse Hessians.
 It also is the first result, to our knowledge, that establishes a reasonable approach to building fully quadratic models with underdetermined interpolation, when the sparsity structure of the objective function is not known.
However, in the current implementation   the sampling is done deterministically
in order to be able to reuse existing sample points. This may be lifted in future parallel implementations.
Note that the constants in the bound~(\ref{maintheoremcond}) (and thus in~(\ref{kigualOlog4nn}))
render the current bounds impractical. In fact, the best known upper bound
(see~\cite{HRauhut_2010}) for the universal constant $c_{total}$ appearing in~(\ref{herefortheremark}) is $c_{total}<17190$
($c_{partial}$ is of the same order),
making~(\ref{maintheoremcond}) only applicable if~$n$ is much greater
than the values for which DFO problems are tractable today by deterministic algorithms.
However, such a bound is most likely not
tight;  in fact, similar universal constants appearing in the setting of
compressed sensing are known to be much smaller in practice.

\section{A practical interpolation-based trust-region method}\label{chapter5}

\subsection{Interpolation-based trust-region algorithms for DFO}

Trust-region methods are a well known class of algorithms for the numerical
solution of nonlinear programming
problems~\cite{ARConn_NIMGould_PhLToint_2000a,JNocedal_SJWright_2006}.
In this section we will give a brief summary of these methods
when applied to the unconstrained minimization
of a smooth function $\fo:D \subset \RR^n \rightarrow \RR$,
\begin{equation}\label{problemaglobal}\min_{x \in \RR^n} \; \fo(x),\end{equation}
without using the derivatives of the objective function $\fo$. For comprehensive coverage we refer the reader to \cite{ARConn_KScheinberg_LNVicente_2009}.

At each iteration~$k$, these methods build a model $m_k(x_k+s)$ of
the objective function in a {\it trust region} of the form $B_p(x_\nstep;\Delta_\nstep)$,
typically with $p=2$,  around the
current iterate $x_k$. The scalar $\Delta_k$ is then called the trust-region radius.
A step $s_k$  is determined by solving the trust-region subproblem
\begin{equation}\label{trustregoinsubproblem}
\min_{s \in B_2(0;\Delta_\nstep)} \; m_\nstep(x_k+s).
\end{equation}
Then, the value $\fo(x_k+s_k)$ is computed and the actual reduction in the objective function
($ared_k = f(x_k) - f(x_k+s_k)$) is compared to
the predicted reduction in the model ($pred_k = m_k(x_k) - m_k(x_k+s_k)$).
If the ratio is big
enough  ($\rho_k=ared_k/pred_k \geq \eta_1 \in (0,1)$),
then $x_k+s_k$ is accepted as the new iterate and  the trust-region radius
may be increased. Such iterations are called successful.
If the ratio is small ($\rho_k < \eta_1$), then the step is
rejected and the trust-region radius is decreased. Such iterations
are called unsuccessful.

The global convergence properties of these methods are strongly dependent
on the requirement that, as the trust region becomes smaller,
the model becomes more accurate, implying in particular that the
 trust-region radius
is bounded away from zero, as long as the stationary point is not reached.
Taylor based-models,
when derivatives are known, naturally satisfy this requirement.
However, in the DFO setting, some provision has to be taken in the model
and sample set management to ensure global convergence. These provisions
aim at guaranteeing that the models produced by the algorithm are fully linear or fully
quadratic, and to guarantee global convergence this must be done in arbitrarily
smaller trust regions thus driving the trust-region radius to zero (such a
procedure is ensured by the so-called criticality step, which has indeed
be shown necessary~\cite{KScheinberg_PhLToint_2009}).

 Conn, Scheinberg, and Vicente~\cite{ARConn_KScheinberg_LNVicente_2009-paper}
proved global convergence to first and second order stationary points depending
 whether fully linear or fully quadratic models are used. The approach proposed in~\cite{ARConn_KScheinberg_LNVicente_2009-paper} involves special model improving iterations.
Scheinberg and Toint~\cite{KScheinberg_PhLToint_2009}
have recently shown global convergence to first order stationary points for their {\it self-correcting
 geometry} approach which replaces model-improving iterations by an appropriate
 update of the sample set using only the new trust-region iterates.

Our results derived in Section \ref{chapter4} provide us with a new method to
produce (with high probability) fully quadratic models by considering randomly sampled sets, instead of model handling iterations as is done in~\cite{ARConn_KScheinberg_LNVicente_2009-paper}  and~\cite{KScheinberg_PhLToint_2009}.
On the other hand, to develop full convergence theory of DFO methods using randomly sampled sets, one needs to adapt the convergence proofs used in~\cite{ARConn_KScheinberg_LNVicente_2009-paper}  and~\cite{KScheinberg_PhLToint_2009} to the case where successful iterations are guaranteed  with high probability, rather than deterministically. This is a subject for future research.

\subsection{A practical interpolation-based trust-region method}

We now introduce a simple practical algorithm which we chose for testing
the performance of different underdetermined models. This algorithm
follows some of the basic
ideas of the approach introduced by
Fasano, Morales, and Nocedal~\cite{GFasano_JLMorales_JNocedal_2009},
which have also inspired the authors in~\cite{KScheinberg_PhLToint_2009}.
The quality of the sample sets is maintained in its simplest form --- simply ensuring sufficient number of sample points ($n+1$ or more) in a reasonable proximity from the current iterate. This approach is theoretically weak as shown
in~\cite{KScheinberg_PhLToint_2009}, but seems to work well in practice.

Unlike~\cite{GFasano_JLMorales_JNocedal_2009},
we discard the sample point farthest away from the new iterate
(rather than the sample point farthest away from the current iterate).
Also, in~\cite{GFasano_JLMorales_JNocedal_2009}, only determined quadratic models were built based on  $p_{\max} = (n+1)(n+2)/2$ sample points.
We compare approaches that use minimum Frobenius or $\ell_1$ norm interpolation to build the models and hence we allow sample sets
of any size less than or equal to $p_{\max}$.
This poses additional issues to those considered in~\cite{GFasano_JLMorales_JNocedal_2009}. For instance,
until the cardinality of the sample set reaches~$p_{\max}$,
we do not discard points from the sample set and always add new trial points
independently of whether or not they are accepted as new iterates, in an attempt to be as greedy as possible when taking advantage of function evaluations.

Another difference from~\cite{GFasano_JLMorales_JNocedal_2009} is that we discard points that are too
far from the current iterate when the trust-region radius becomes small (this can be viewed as a weak
criticality condition), expecting that the next iterations will refill the sample set resulting in a
similar effect as a criticality step.
Thus, the cardinality of our sample set might fall below $p_{\min} = n+1$, the number required to build fully linear models in general. In such situations, we never reduce the trust-region radius.

\noindent
\begin{algorithm}[A practical DFO trust-region algorithm]
\label{alg:dfo}
\ \\
\noindent
{\bf Step 0: Initialization.}
\begin{itemize}
\item[] {\bf Initial values.} Select values for the constants $\epsilon_g(=10^{-5}) > 0$,
$\delta(=10^{-5}) > 0$, $0 < \eta_1(=10^{-3})$, $\eta_2 (=0.75) > \eta_1$,
and $0 < \gamma_1(=0.5) < 1 < \gamma_2(=2)$.
Set $p_{\min} = n+1$ and $p_{\max} = (n+1)(n+2)/2$.
Set the initial trust-region radius $\Delta_0(=1) > 0$.
Choose the norm $t=1$ (the $\ell_1$-norm) or $t=2$ (the Frobenius norm).

\item[] {\bf Initial sample set.}
Let the starting point $x_0$ be given.
Select as an initial sample set
$Y_0 = \{ x_0, x_0 \pm \Delta_0 e_i, i=1,\ldots,n \}$, where the $e_i$'s are the columns of the
identity matrix of order~$n$.

\item[] {\bf Function evaluations.}
Evaluate the objective function at all $y \in Y_0$.

\end{itemize}
Set $k=0$.

\noindent
{\bf Step 1: Model building.}
\begin{itemize}
\item[]
Form a quadratic model $m_k(x_k+s)$ of the objective function
from $Y_k$.
 Solve the problem
\begin{equation}\label{modelo52}
\begin{array}{cl}
\min & \frac1p\|\coe_Q\|_t^t\\ [1ex]
\operatorname{s.t.} & M(\bar{\phi}_Q,Y_k)\coe_Q+M(\bar{\phi}_L,Y_k)\coe_L \; = \; \fo(Y_k),
\end{array}
\end{equation}
where $\coe_Q$ and $\coe_L$ are, respectively, the coefficients of
order~2 and order less than~2 of the model.
\end{itemize}

\noindent
{\bf Step 2: Stopping criteria.}
\begin{itemize}
\item[]
Stop if $\| g_k \| \leq \epsilon_g$ or $\Delta_k \leq \delta$.
\end{itemize}

\noindent
{\bf Step 3: Step calculation.}
\begin{itemize}
\item[] Compute a step $s_k$ by solving (approximately) the trust-region
subproblem~(\ref{trustregoinsubproblem}).
\end{itemize}

\noindent
{\bf Step 4: Function evaluation.}
\begin{itemize}
\item[]
Evaluate the objective function at $x_k+s_k$. Compute
 $\rho_k=(f(x_k) - f(x_k+s_k))/(m_k(x_k) - m_k(x_k+s_k))$.
\end{itemize}

\noindent
{\bf Step 5: Selection of the next iterate and trust radius update.}
\begin{itemize}
\item[] If $\rho_k < \eta_1$, reject the trial step, set $x_{k+1} = x_k$,
and reduce the trust-region radius, if $|Y_k| \geq p_{\min}$, by setting $\Delta_k = \gamma_1 \Delta_k$
({\bf unsuccessful iteration}).
\item[] If $\rho_k \geq \eta_1$, accept the trial step $x_{k+1} = x_k + s_k$
({\bf successful iteration}).
\item[] Increase the trust-region radius,
$\Delta_{k+1} = \gamma_2 \Delta_k$, if $\rho_k > \eta_2$.
\end{itemize}

\noindent
{\bf Step 6: Update the sample set.}
\begin{itemize}
\item[]
If $|Y_k| = p_{\max}$, set $y^{out}_{k} \in \argmax \| y - x_{k+1} \|_2$ (break ties arbitrarily).
\item[]
If the iteration was successful:
\begin{itemize}
\item[] If $|Y_k| = p_{\max}$,
$Y_{k+1} = Y_k \cup \{ x_{k+1} \} \setminus
\{ y^{out}_{k} \}$.
\item[]
If $|Y_k| < p_{\max}$, $Y_{k+1} = Y_k \cup \{ x_{k+1} \}$.
\end{itemize}
If the iteration was unsuccessful:
\begin{itemize}
\item[] If $|Y_k| = p_{\max}$,
$Y_{k+1} = Y_k \cup \{ x_k+s_k \} \setminus
\{ y^{out}_{k} \}$ if $\| (x_k+s_k) - x_k \|_2 \leq \| y^{out}_{k} - x_k \|_2$.
\item[]
If $|Y_k| < p_{\max}$, $Y_{k+1} = Y_k \cup \{ x_k+s_k \}$.
\end{itemize}
\end{itemize}

\noindent
{\bf Step 7: Model improvement.}
\begin{itemize}
\item[] When $\Delta_{k+1} < 10^{-3}$, discard from
$Y_{k+1}$ all the points outside $B(x_{k+1}; r \Delta_{k+1})$,
where $r$ is chosen as the smallest number in $\{100,200,400,800,...\}$
for which at least three sample points from $Y_{k+1}$
are contained in $B(x_{k+1}; r \Delta_{k+1})$.
\end{itemize}
Increment $k$ by $1$ and return to Step~1.

\end{algorithm}

We note that relying on the model gradient to stop might not be a reliable
stopping criterion, as we need to resample the function inside a smaller
trust region and construct a new model before safely quitting (criticality step).
However, in a practical method, one avoids doing it.

\subsection{Numerical results}\label{sec53}

In this section we  describe  the numerical
experiments which  test the performance
of Algorithm~\ref{alg:dfo} implemented in MATLAB.
 In particular we are  interested in
testing two variants of Algorithm~\ref{alg:dfo} defined by the
norm used to compute the model in~(\ref{modelo52}). The first
variant makes use of the $\ell_2$-norm and leads to
minimum Frobenius norm models.
The solution of~(\ref{modelo52}) with $t=2$ is a convex quadratic problem subject to equality constraints and hence is equivalent to solving
 the following linear system
\[
\left[\begin{array}{cc}
M(\bar{\phi}_Q,\Pon_k)M(\bar{\phi}_Q,\Pon_k)^T        &          M(\bar{\phi}_L,\Pon_k)\\
M(\bar{\phi}_L,\Pon_k)^T                            &                 0
\end{array}\right]
\left[\begin{array}{c}\lambda\\ \coe_L\end{array}\right] \; = \;
\left[\begin{array}{c}\fo(\Pon_k)\\ 0\end{array}\right]
\]
with $\alpha_Q = M(\bar{\phi}_Q,\Pon_k)^T\lambda$.

We solved this system using SVD, regularizing extremely small singular values
after the decomposition and before
performing the backward solves, in an attempt to remediate
extreme
ill-conditioning caused by nearly ill-poised sample sets.
The second approach consisted in using $t=1$, leading
to minimum $\ell_1$-norm
models and attempting to recover sparsity in the Hessian of the objective
function. To solve problem~(\ref{modelo52}) with $t=1$ we formulated it first as a linear program.
In both cases, $t=1,2$, we first scaled the corresponding problems
by shifting the sample set to the origin (i.e., translating all
the sample points such that the current iterate
coincides with the origin) and then scaling the points so that they
lie in $B_2(0;1)$ with at least one scaled point at the border
of this ball.
This procedure, suggested in~\cite[Section 6.3]{ARConn_KScheinberg_LNVicente_2009}, leads to an improvement
of the numerical results, especially in the minimum Frobenius
norm case.

The trust-region subproblems~(\ref{trustregoinsubproblem}) have been solved using
the routine {\tt trust.m} from the MATLAB Optimization Toolbox
which corresponds essentially to the algorithm of Mor\'e and
Sorensen~\cite{JJMore_DCSorensen_1983}. To solve the linear
programs~(\ref{modelo52}), with $t=1$, we have used the routine {\tt linprog.m}
from the same MATLAB toolbox. In turn, {\tt linprog.m}
uses in most of the instances considered in our optimization
runs the interior-point solver {\tt lipsol.m},
developed by Zhang~\cite{YZhang_1997-tech}.

In the first set of experiments,
we considered the test set of unconstrained problems
from the CUTEr collection~\cite{MGould_DOrban_PhLToint_2003} used in~\cite{SGratton_PLToint_ATro_2010}, and in~\cite{GFasano_JLMorales_JNocedal_2009}.
We used the same dimension choices as in~\cite{SGratton_PLToint_ATro_2010}
but we removed all problems considered there with less than~5 variables.
This procedure resulted in the test set described in
Table~\ref{tab:26problemas}. Most of these problems exhibit some
form of sparsity in the Hessian of the objective function,
 for instance, a banded format.

\begin{table}[h]
\begin{center}
\scriptsize
\begin{tabular}{|l|c|c|c|c|} \hline
problem & $n$ & {\tt NNZH} & {\tt DFO-TR Frob} ($acc=6$) & {\tt DFO-TR l1} ($acc=6$)\\ \hline
{\tt ARGLINB } & 10 & 55 & 57       & 59\\ \hline
{\tt ARGLINC } & 8  & 21 & 56       & 57\\ \hline
{\tt ARWHEAD } & 15 & 29 & 195      & 143\\ \hline
{\tt BDQRTIC } & 10 & 40 & 276      & 257\\ \hline
{\tt BIGGS6  } &  6 & 21 & 485      & 483\\ \hline
{\tt BROWNAL } & 10 & 55 & 437      & 454\\ \hline
{\tt CHNROSNB} & 15 & 29 & 993      & 1004 \\ \hline
{\tt CRAGGLVY} & 10 & 19 & 548      & 392\\ \hline
{\tt DIXMAANC} & 15 & 44 & 330      & 515\\ \hline
{\tt DIXMAANG} & 15 & 44 & 395      & 451\\ \hline
{\tt DIXMAANI} & 15 & 44 & 429      & 361\\ \hline
{\tt DIXMAANK} & 15 & 44 & 727      & 527\\ \hline
{\tt DIXON3DQ} & 10 & 18 & --       & --\\ \hline
{\tt DQDRTIC } & 10 & 10 & 25       & 25\\ \hline
{\tt FREUROTH} & 10 & 19 & 249      & 252\\ \hline
{\tt GENHUMPS} &  5 & 9 & 1449     & 979\\ \hline
{\tt HILBERTA} & 10 & 55 & 8        & 8\\ \hline
{\tt MANCINO } & 10 & 55 & 106      & 73\\ \hline
{\tt MOREBV  } & 10 & 27 & 111      & 105\\ \hline
{\tt OSBORNEB} & 11 & 66 & 1363     & 1023\\ \hline
{\tt PALMER1C} &  8 & 36 & --       & -- \\ \hline
{\tt PALMER3C} &  8 & 36 & 56       & 53\\ \hline
{\tt PALMER5C} &  6 & 21 & 29       & 29\\ \hline
{\tt PALMER8C} &  8 & 36 & 60       & 55\\ \hline
{\tt POWER   } & 10 & 55 & 466      & 428\\ \hline
{\tt VARDIM  } & 10 & 55 & 502      & 314\\ \hline
\end{tabular}
\caption{\label{tab:26problemas}The test set used in the first set of experiments and the
corresponding dimensions (first three columns). The third column reports the upper bound provided
by CUTEr on the number of nonzero elements of the Hessian stored using the coordinate format.
 The last two columns report the total number of function evaluations required by
 Algorithm~\ref{alg:dfo} to achieve an accuracy of $10^{-6}$ on the objective function value
(versions {\tt DFO-TR Frob} and {\tt DFO-TR l1}). Both approaches failed to
solve  two of the problems.}
\normalsize
\end{center}
\end{table}

In order to present the numerical results for all problems and all
methods (and variants) considered, we have used the so-called performance profiles, as suggested in~\cite{EDDolan_JJMore_2002}.
Performance profiles are, essentially, plots of cumulative distribution
functions $\rho(\tau)$ representing a performance ratio for the different solvers.
Let $\mathcal{S}$
be the set of solvers and $\mathcal{P}$ the set of problems.
Let $t_{p,s}$ denote the performance of the solver~$s \in \mathcal{S}$ on the problem~$p \in \mathcal{P}$ ---
lower values of $t_{p,s}$ indicate better performance.
This performance ratio $\rho(\tau)$ is defined by first setting
$r_{p,s}=t_{p,s}/\min\{t_{p,\bar{s}}: \bar{s}\in \mathcal{S}\}$, for $p\in\mathcal{P}$ and $s\in \mathcal{S}$.
Then, one defines $\rho_s(\tau)=(1/|\mathcal{P}|)|\{p\in \mathcal{P}:r_{p,s}\leq\tau\}|$. Thus,  $\rho_s(1)$ is the probability
that  solver~$s$ has  the best performance among all solvers. If we are only interested in determining
which solver is the most efficient (is the fastest on most problems),
then we should compare the values of
$\rho_s(1)$ for all the solvers. On the other hand, solvers with the largest value of  $\rho_s(\tau)$ for large $\tau$ are the ones which
 solve the largest number of problems in $\mathcal{P}$, hence are the most robust.
We are interested in considering a wide range of values for $\tau$, hence, we plot the performance profiles in a $\log$-scale (now, the value at $0$ represents the probability of winning over the other solvers).

\begin{figure}[!h]
\begin{center}
\includegraphics[width=10cm]{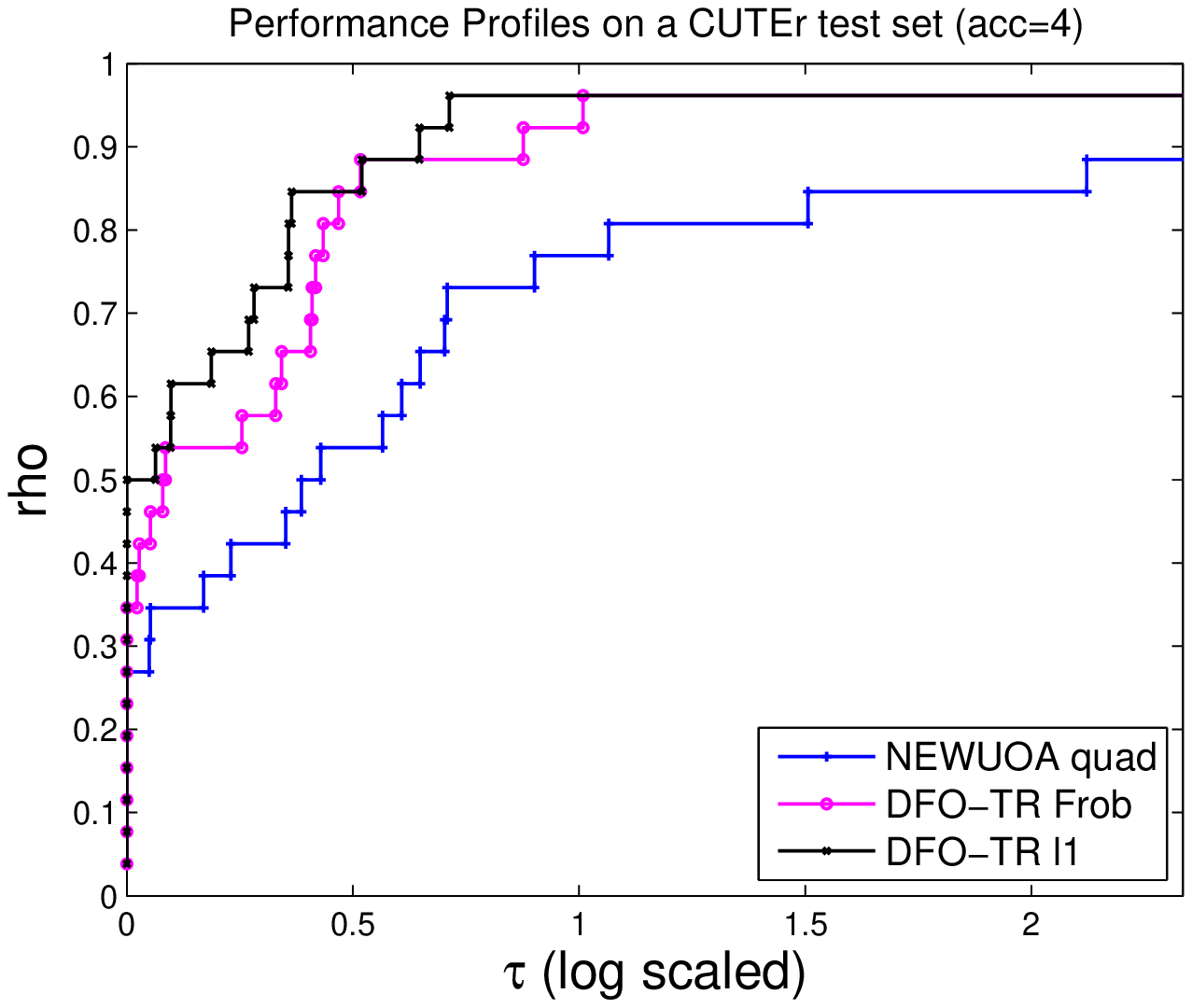}\\
\includegraphics[width=10cm]{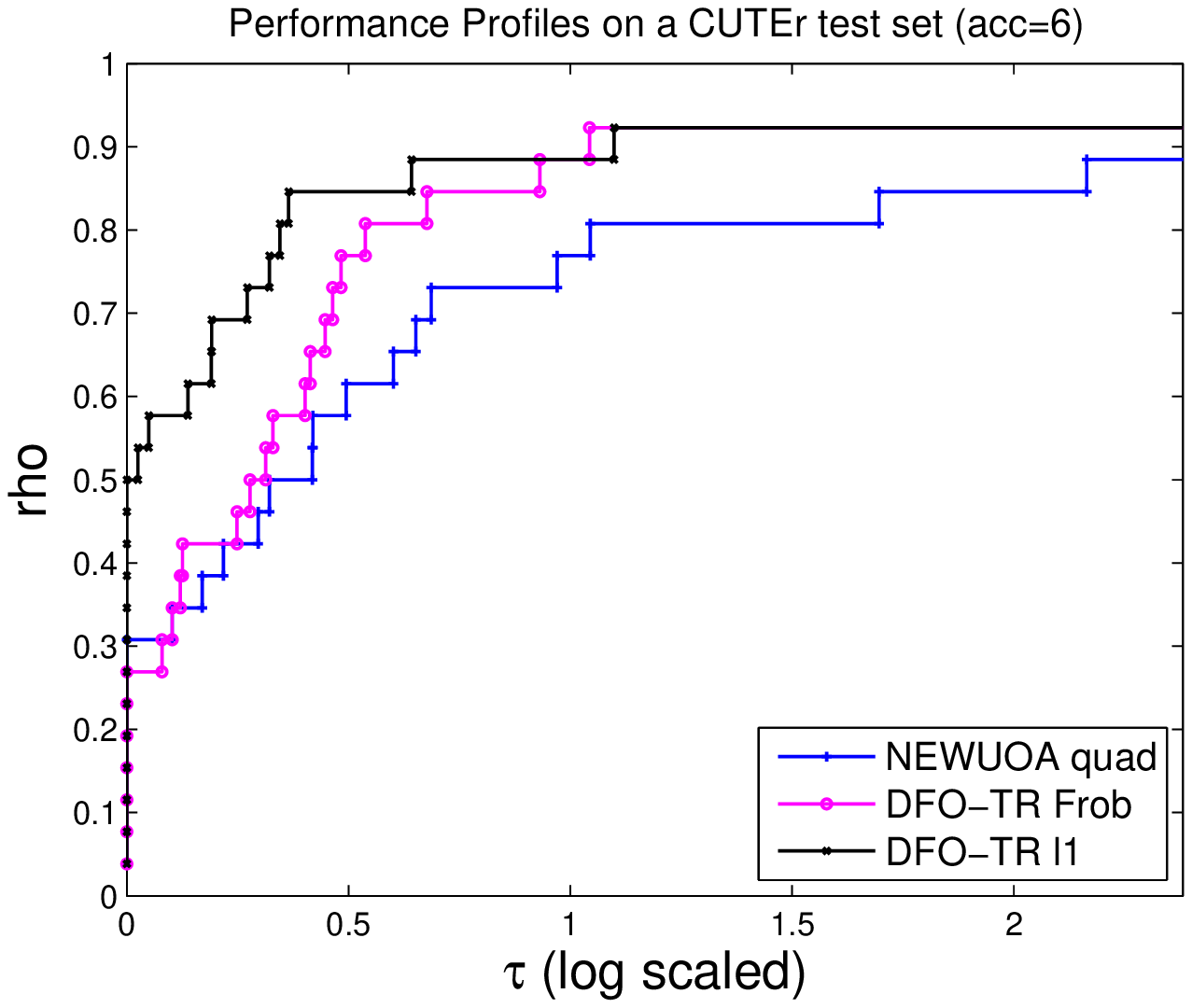}\\
\caption{Performance profiles comparing Algorithm~\ref{alg:dfo}
(minimum Frobenius and $\ell_1$ norm versions)
and NEWUOA~\cite{MJDPowell_2003,MJDPowell_2004-tech}, on the test set of Table~\ref{tab:26problemas},
for two levels of accuracy ($10^{-4}$ above and
$10^{-6}$ below).}\label{performanceprofiles}
\end{center}
\end{figure}

In our experiments, we took the \textit{best}
objective function value from~\cite{SGratton_PLToint_ATro_2010}
(obtained by applying a derivative-based Non-Linear Programming solver), as a benchmark to detect
whether a problem was successfully solved up to a
certain accuracy $10^{-acc}$. The number $t_{p,s}$ is then the number of
function evaluations needed to achieve an
objective function value within an absolute error of $10^{-acc}$
of the best objective function value;
otherwise a failure occurs and the value of $r_{p,s}$ used to build
the profiles is set to a  large number
(see~\cite{EDDolan_JJMore_2002}). Other measures of performance could be used for $t_{p,s}$ but the number of function evaluations is
 the most appropriate for expensive objective functions.
In Figure~\ref{performanceprofiles}, we plot performance profiles for the two variants
of Algorithm~\ref{alg:dfo} mentioned above and for the state-of-the-art solver NEWUOA~\cite{MJDPowell_2003,MJDPowell_2004-tech}.
Following~\cite{EDolan_JMore_TMunson_2006},
and in order to provide a fair comparison,
solvers are run first with their own default
stopping criterion and if convergence can not be declared
another run is repeated with tighter tolerances.
In the case of Algorithm~\ref{alg:dfo}, this procedure led to
$\epsilon_g = \delta = 10^{-7}$ and a maximum number of
$15000$ function evaluations. For NEWUOA we used the data prepared for~\cite{SGratton_PLToint_ATro_2010}
also for a maximum number of
$15000$ function evaluations.

Note that NEWUOA requires
an interpolation of fixed cardinality
in the interval $[2n+1,(n+1)(n+2)/2]$ throughout the entire
optimization procedure.
We looked at the extreme possibilities, $2n+1$ and $(n+1)(n+2)/2$,
and are reporting results only with the latter one
({\tt NEWUOA quad} in the plots)
since it was the
one which gave the best results.
The two variants of Algorithm~\ref{alg:dfo}, are referred
 to as {\tt DFO-TR Frob}
 (minimum Frobenius norm models)
and {\tt DFO-TR l1} (minimum $\ell_1$-norm models).
Two levels of accuracy ($10^{-4}$ and $10^{-6}$) are considered
in Figure~\ref{performanceprofiles}.
One can observe that {\tt DFO-TR l1} is the most efficient version ($\tau=0$ in the $\log$ scale) and basically as robust as the {\tt DFO-TR Frob} version (large values of $\tau$), and that both versions of the Algorithm~\ref{alg:dfo} seem to outperform {\tt NEWUOA quad} in efficiency and robustness.

\begin{table}[h]
\begin{center}
\scriptsize
\begin{tabular}{|l|c|c|c|} \hline
problem & $n$ & type of sparsity & {\tt NNZH} \\ \hline
{\tt ARWHEAD}     & 20 & sparse & 39 \\ \hline
{\tt BDQRTIC}     & 20 & banded & 90\\ \hline
{\tt CHNROSNB}     & 20 & banded & 39\\ \hline
{\tt CRAGGLVY}    & 22 & banded & 43 \\ \hline
{\tt DQDRTIC}     & 20 & banded & 20 \\ \hline
{\tt EXTROSNB}    & 20 & sparse & 39 \\ \hline
{\tt GENHUMPS}    & 20 & sparse & 39 \\ \hline
{\tt LIARWHD}     & 20 & sparse & 39 \\ \hline
{\tt MOREBV}      & 20 & banded & 57 \\ \hline
{\tt POWELLSG}    & 20 & sparse & 40 \\ \hline
{\tt SCHMVETT}    & 20 & banded & 57 \\ \hline
{\tt SROSENBR}    & 20 & banded & 30 \\ \hline
{\tt WOODS}       & 20 & sparse & 35 \\ \hline
\end{tabular}
\caption{\label{tab:14problemas}The test set used in the second set of experiments.
For each problem we included the number of variables and the type of sparsity, as described in~\cite{BColson_PhLToint_2005}.
The last column reports the upper bound provided
by CUTEr on the number of nonzero elements of the Hessian stored using the coordinate format.}
\normalsize
\end{center}
\end{table}

In the second set of experiments we ran Algorithm~\ref{alg:dfo}
for the two variants (minimum Frobenius and $\ell_1$ norm models)
on the test set of CUTEr unconstrained problems used
in the paper~\cite{BColson_PhLToint_2005}. These problems
are known to have a significant amount of sparsity in
the Hessian (this information as well as the dimensions
selected is described in Table~\ref{tab:14problemas}). We used
 $\epsilon_g = \delta = 10^{-5}$ and a maximum number of
$5000$ function evaluations. In Table~\ref{tab:14resultados}, we report the
number of objective function evaluations taken as well as the
final objective function value obtained.
In terms of function evaluations,
one can observe that {\tt DFO-TR l1}   wins in approximately $8/9$ cases,
when compared to the {\tt DFO-TR Frob} version, suggesting that the former
is more efficient than the latter in the presence of Hessian sparsity.
Another interesting aspect of the
{\tt DFO-TR~l1} version is some apparent ability to produce the
final model
with gradient of smaller norm.

\begin{table}[h]
\begin{center}
\scriptsize
\begin{tabular}{|l|c|c|c|c|} \hline
problem & {\tt DFO-TR Frob/l1} &
                     $\#$ $\fo$ eval &
                                     final $\fo$ value  &
                                                    final $\nabla m$ norm\\\hline
{\tt ARWHEAD}     & {\tt Frob}      & 338  & 3.044e-07 & 3.627e-03 \\
{\tt ARWHEAD}     & {\tt l1}        & 218  & 9.168e-11 & 7.651e-07 \\\hline
{\tt BDQRTIC}     & {\tt Frob}      & 794  & 5.832e+01 &5.419e+05\\
{\tt BDQRTIC}     & {\tt l1}        & 528  & 5.832e+01 &6.770e-02\\\hline
{\tt CHNROSNB}    & {\tt Frob}      & 2772 &  3.660e-03  &2.025e+03\\
{\tt CHNROSNB}    & {\tt l1}        & 2438 &  2.888e-03  &1.505e-01\\\hline
{\tt CRAGGLVY}    & {\tt Frob}      & 1673 &  5.911e+00  &1.693e+05\\
{\tt CRAGGLVY}    & {\tt l1}        & 958  &  5.910e+00  &8.422e-01\\\hline
{\tt DQDRTIC}     & {\tt Frob}      & 72   &  8.709e-11  &6.300e+05\\
{\tt DQDRTIC}     & {\tt l1}        & 45   &  8.693e-13  &1.926e-06\\\hline
{\tt EXTROSNB}    & {\tt Frob}      & 1068 &  6.465e-02  &3.886e+02\\
{\tt EXTROSNB}    & {\tt l1}        & 2070 &  1.003e-02  &6.750e-02\\\hline
{\tt GENHUMPS}    & {\tt Frob}      & 5000 & 4.534e+05 & 7.166e+02\\
{\tt GENHUMPS}    & {\tt l1}        & 5000 & 3.454e+05 & 3.883e+02\\\hline
{\tt LIARWHD}     & {\tt Frob}      & 905  & 1.112e-12 & 9.716e-06\\
{\tt LIARWHD}     & {\tt l1}        & 744  & 4.445e-08 & 2.008e-02\\\hline
{\tt MOREBV}      & {\tt Frob}      & 539  & 1.856e-04 & 2.456e-03\\
{\tt MOREBV}      & {\tt l1}        & 522  & 1.441e-04 & 3.226e-03\\\hline
{\tt POWELLSG}    & {\tt Frob}      & 1493 & 1.616e-03 & 2.717e+01\\
{\tt POWELLSG}    & {\tt l1}        & 5000 & 1.733e-04 & 2.103e-01\\\hline
{\tt SCHMVETT}    & {\tt Frob}      & 506  & -5.400e+01 &1.016e-02\\
{\tt SCHMVETT}    & {\tt l1}        & 434  & -5.400e+01 &7.561e-03\\\hline
{\tt SROSENBR}    & {\tt Frob}      & 456  & 2.157e-03 & 4.857e-02\\
{\tt SROSENBR}    & {\tt l1}        & 297  & 1.168e-02 & 3.144e-01\\\hline
{\tt WOODS}       & {\tt Frob}      & 5000 & 1.902e-01 & 8.296e-01\\
{\tt WOODS}       & {\tt l1}        & 5000 & 1.165e+01 & 1.118e+01\\\hline
\end{tabular}
\caption{\label{tab:14resultados}Results obtained by {\tt DFO-TR Frob} and {\tt DFO-TR l1} on the problems of
Table~\ref{tab:14problemas} (number of evaluations of the objective function, final value of the objective function, and the norm of the final model gradient).}
\normalsize
\end{center}
\end{table}

\section{Conclusion}\label{chapter6}

Since compressed sensing emerged, it has been deeply connected to
optimization, using optimization as a fundamental tool (in particular, to solve
$\ell_1$-minimization problems). In this paper, however, we have shown that
compressed sensing methodology can also serve as a powerful tool for
optimization, in particular for Derivative-Free Optimization (DFO),
where structure recovery can improve the performance of optimization methods.
Namely, our goal was  to construct
fully quadratic models (essentially models with an accuracy as good
as second order Taylor models; see Definition~\ref{deffullyquadmodel}) of a
function with sparse Hessian using underdetermined quadratic interpolation
on a sample set with potentially much fewer than~$\OOO(n^2)$ points.
We were able to achieve this as is shown  in Theorem~\ref{maintheorem},
 by considering an appropriate polynomial basis and
 random sample sets of only~$\OOO(n (\log n)^4)$ points
when the number of non-zero components of the Hessian is~$\OOO(n)$.
The corresponding quadratic interpolation models were built
by minimizing the $\ell_1$-norm of the entries of the Hessian model.
We then tested the new model selection approach in a  deterministic
setting, by using the  minimum $\ell_1$-norm quadratic models
in a practical interpolation-based trust-region
method (see Algorithm~\ref{alg:dfo}). Our algorithm
was able to outperform state-of-the-art DFO methods as shown
in the numerical experiments reported in Section~\ref{sec53}.

One possible way of solving the $\ell_1$-minimization
problem~(\ref{minl1cap2}) in the context of
interpolation-based trust-region methods
is to rewrite it as a linear program. This approach was used
to numerically test Algorithm~\ref{alg:dfo} when solving
problems~(\ref{modelo52}) for $t=1$.
For problems of up to~$n=20,30$ variables, this
way of solving the $\ell_1$-minimization problems has produced
excellent results in terms of the derivative-free
solution of the original minimization problems~(\ref{problemaglobal})
 and is reasonable in terms of the overall CPU time.

However, for larger values of~$n$, the repeated solution of the
linear programs  introduces significant overhead.
Besides the  increase in the dimension, one also has to
consider possible ill-conditioning arising due to badly poised sample sets.
Although related linear programming problems are solved in consecutive
iterations, it is not trivial to use
warmstart. In fact, the number
of rows in the linear programs change  frequently, making
it difficult to warmstart simplex-based methods.
An alternative is to attempt to approximately solve
problem~(\ref{minl1cap2}) by solving
$\min \| M(\bar{\phi},\Pon) \alpha - f(\Pon) \|_2 + \tau \| \alpha_Q \|_1$
for appropriate values of $\tau > 0$. We conducted
preliminary testing along this avenue
but did not succeed in outperforming the linear
programming approach in any respect.
However, it is out of the scope of this paper a deeper study
of the numerical solution of the $\ell_1$-minimization
problem~(\ref{minl1cap2}) in the context of
interpolation-based trust-region methods.

Although we only considered the most common type of sparsity in
unconstrained optimization (sparsity in the Hessian), it is
straightforward to adapt our methodology to the case where
sparsity also appears in the gradient. In particular, if we
aim at only recovering a sparse gradient or a sparse fully linear model,
one can show that the required number of sample points to do so would be
less that $\mathcal{O}(n)$ and tighten to the level of sparsity.

Finally, we would like to stress that building
accurate quadratic models for functions with sparse
Hessians from function samples could be
of interest outside the field of Optimization.
The techniques and theory developed in Section~\ref{chapter4}
could also be applicable in other settings
of Approximation Theory and Numerical Analysis.

\subsection*{Acknowledgments}
We would like to thank Rachel Ward (Courant Institute of Mathematical Sciences,
NYU) for interesting discussions on compressed sensing and to Anke Tr$\ddot{\text{o}}$ltzsch
(CERFACS, Toulouse)
for providing us assistance with the testing environment of Section~\ref{sec53}. We are also grateful to the anonymous referees for their
 helpful comments on the earlier version of the manuscript.

\small

\bibliographystyle{plain}
\bibliography{ref-dfol1}

\end{document}